\numberwithin{equation}{section}
\def\be{\begin{equation}}
\def\ee{\end{equation}}
\def\n{\hbox{\kern -1.6pt{\rm N}}}
\def\H{\hbox{\kern -1.6pt{\rm H}}}
\def\IP{\hbox{\rm I\kern -1.6pt{\rm P}}}
\def\IC{{\hbox{\rm C\kern-.58em{\raise.53ex\hbox{$\scriptscriptstyle|$}}
    \kern-.55em{\raise.53ex\hbox{$\scriptscriptstyle|$}} }}}
\def\IN{\hbox{I\kern-.2em\hbox{N}}}
\def\IR{\hbox{\rm I\kern-.2em\hbox{\rm R}}}
\def\ZZ{\hbox{{\rm Z}\kern-.33em{\rm Z}}}
\def\IT{\hbox{\rm T\kern-.38em{\raise.415ex\hbox{$\scriptstyle|$}} }}
\def\ang{\hbox{$<$\kern-.42em\hbox{\rm )} }}
\def\b{\mathbb{B}}
\def\r{\mathbb{R}}
\def\z{\mathbb{Z}}
\def\H{\mathbb{H}}
\def\n{\mathbb{N}}
\def\N{\mathbb{N}}
\newcommand{\df}{{\, \stackrel{\mathrm{def}}{=}\, }}
\newcommand{\dist}{\operatorname{dist}}
\def\SO{\operatorname{SO}}
\newcommand {\ignore}[1]  {}
\begin{document}

\title[Lattice points counting and shrinking targets]{An application of lattice points counting \\ to shrinking target problems}
%\author{Dmitry Kleinbock and Xi Zhao
%\\ Department of Mathematics\\
%University of Alabama at Birmingham\\
%Birmingham, AL 35294\\
%E-mail: chernov@vorteb.math.uab.edu\\
%Fax: 1-(205)-934-9025
%}
\date{November 15, 2016\\ The first-named author was supported by NSF grants DMS-1101320 and DMS-1600814.}

\author{Dmitry Kleinbock}
\address{Brandeis University, Waltham MA
02454-9110 {\tt kleinboc@brandeis.edu}}
\author{Xi Zhao}
\address{Brandeis University, Waltham MA 02454-9110
{\tt zhaoxi89@brandeis.edu}}

\subjclass{Primary: 37D40; Secondary: 53D25, 37A25.}
% Please provide minimum  5 keywords.
 \keywords{Shrinking target problems, hyperbolic geometry, geodesic flows, counting of lattice points.}

\maketitle

\begin{abstract}
We apply lattice points counting results %of   Gorodnik and   Nevo \cite{AA} 
to solve a shrinking target problem in the setting of discrete time geodesic flows on hyperbolic manifolds of finite volume.
\end{abstract}
%\footnote{}

%\newtheorem{Thm}{Theorem}[section]
\newtheorem{Thm}{Theorem}[section]
\newtheorem{Cor}[Thm]{Corollary}
\newtheorem{Prop}[Thm]{Proposition}
\newtheorem{Lemma}[Thm]{Lemma}
\newtheorem{Q}[Thm]{Question}
\newtheorem{Ex}[Thm]{Exercise}
\newtheorem{Rmk}[Thm]{Remark}
\newtheorem{Example}[Thm]{Example}
\newtheorem{Conj}[Thm]{Conjecture}
\newtheorem{Def}[Thm]{Definition}
%\numberwithin{equation}{section}
%\newtheorem{Thm}{Theorem}
%\newtheorem{Lemma}[theorem]{Lemma}
\newtheorem{sublemma}[Thm]{Sublemma}

\newcommand\eq[2]{\begin{equation}\label{eq:#1}{#2}\end{equation}}
\newcommand {\equ}[1]     {\eqref{eq:#1}}

\newcommand {\new}[1]   {\textcolor{blue}{#1}}
\newcommand {\attention}[1]   {\textcolor{red}{#1}}
\newcommand {\myquestion}[1]   {\textcolor{green}{#1}}

%\footnotetext{$^\ast$ Partially supported by NSF grant DMS-9732728.}

%\footnotetext{$^\dagger$ Partially supported by NSF grant DMS-9704489.}

\section{Introduction}

Let $(X,\mu)$ be a probability space and $T: X\mapsto X$    a measure-preserving transformation.
For a sequence of measurable sets $B_n\subset X$, 
%consider
%$B_n=T^{-n}B_n$ and 
consider the set $$ \limsup_n
T^{-n}B_n=\cap_{m=1}^{\infty}\cup_{n=m}^{\infty}T^{-n}B_n$$ of points $x\in X$ such that $T^nx\in B_n$
for infinitely many $n\in \N$. The Borel-Cantelli Lemma implies that if
$\sum_{n=1}^\infty \mu(B_n)$ is finite, then $\mu(\limsup_n
T^{-n}B_n) = 0$. The (converse) divergence case requires additional assumptions on the sets $B_n$. The classical Borel-Cantelli Lemma would imply that the measure of $\limsup_n
T^{-n}B_n$ is full if the sets $T^{-n}B_n$ are pairwise independent, an assumption which is hard to establish for deterministic dynamical systems.

 In many cases however a milder version of independence can be verified, still implying the full measure of the limsup set. Such results are usually referred to as dynamical Borel-Cantelli Lemmas. 
In many applications the family of sets $\{B_n\}$ is nested, and thus can be viewed as a `shrinking target', hence the terminology `Shrinking Target Problems'.  For example, if $\{B_n\}$ are shrinking balls centered at a point $p\in X$, a dynamical Borel-Cantelli Lemma can be thought of as a quantitative way to express density of trajectories of a generic point of $X$ at this fixed point $p$. Starting from the work of Phillip \cite{PW},  there have been many  results of this flavor. For example Sullivan \cite{1982} proved a Borel-Cantelli type theorem for cusp neighborhoods in hyperbolic manifolds of finite volume (here $p = \infty$), and the first named author with Margulis \cite{GM}  extended the result of Sullivan to non-compact Riemannian symmetric spaces. See also \cite{Ath1, Dolg, Gal, GS, note} for more references, and \cite{Ath2} for a nice survey of the area.

 %One of these statistical properties is the Borel-Cantelli property, which will be discussed in detail later. ??
% The first example of such study was established by W. Philipp in \cite{PW}. Specifically, he takes any point in unit interval, say $x_0$, and considers the family of intervals $(x_0 -r_t,x_0 + r_t)$, viewing it as a "target shrinking to $x_0$". It is proved that when the radius $r_t$ decays slowly enough, almost all trajectories of certain expanding maps of the interval get into infinitely many 
 %$(x_0 -r_t,x_0 + r_t)$, for infinity many $t \in \n$.
 %There have been some remarkable results on the shrinking target properties in the context of dynamical systems. 

%There have been many results on the Borel-Cantelli property %of the shrinking target  for hyperbolic systems. For instance
One particular example of a shrinking target property can be found in a paper by  Maucourant \cite{FM}. He considered nested balls in hyperbolic manifolds (quotients of the $n$-dimensional hyperbolic space $\H^n$) of finite volume, and proved the following theorem: 

\begin{Thm}
\label{maucorantstheorem}
Let $V$ be a finite volume hyperbolic manifold of real dimension $n$, $T^1V$  the unit tangent bundle of $V $, $\pi: T^1V \to V$ the canonical projection, $(\phi_t)_{t \in \r}$ the geodesic flow on $T^1V$, $\mu$ the Liouville measure on $T^1V$, and $d$ the Riemannian distance on $V$. Let $(B_t)_{t\geq 0}$ be a decreasing family of closed balls in $V$ (with respect to the metric $d$)
of radius $(r_t){_t\geq 0}$. Then for $\mu$-almost every $v$ in $T^1V$, the set
$\{t\geq 0: \pi (\varphi^tv)\in B_t\}$  is  bounded provided \eq{int}{\int_0^{\infty} r_t^{n-1}dt} converges, and is unbounded if \equ{int}
%$\int_0^{\infty} r_t^{n-1}dt$ $(\star)$ 
diverges.
\end{Thm}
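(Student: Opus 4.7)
The plan is to discretize the continuous-time event and apply a quasi-independent Borel--Cantelli lemma, with the necessary correlation estimates supplied by lattice points counting in $\H^n$. For each integer $k \geq 0$, define
\[
A_k = \{v \in T^1V : \pi(\phi^t v) \in B_k \text{ for some } t \in [k, k+1]\}.
\]
Since $(B_t)$ is decreasing, the set $\{t \geq 0 : \pi(\phi^t v) \in B_t\}$ is unbounded if and only if $v \in A_k$ for infinitely many $k$, so everything reduces to computing $\mu(\limsup_k A_k)$.

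The key geometric input is $\mu(A_k) \asymp r_k^{n-1}$ once $r_k$ is small. Writing $A_k = \phi^{-k} C_k$ with $C_k = \{v : \pi(\phi^s v) \in B_k \text{ for some } s \in [0,1]\}$, invariance of $\mu$ gives $\mu(A_k) = \mu(C_k)$; one then evaluates $\mu(C_k)$ in polar coordinates around the center $p$, using that at distance $R \in (r_k, 1+r_k]$ the angular radius of $B(p,r_k)$ is of order $r_k/\sinh(R)$, so the solid angle of directions whose length-one geodesic meets $B(p,r_k)$ is of order $(r_k/\sinh R)^{n-1}$; integrating against the hyperbolic volume factor $\sinh(R)^{n-1}\,dR$ cancels the denominator and yields $\mu(C_k) \asymp r_k^{n-1}$. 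Hence $\sum_k \mu(A_k)$ converges or diverges together with $\int_0^{\infty} r_t^{n-1}\,dt$, and the convergence direction of the theorem follows at once from the first Borel--Cantelli lemma.

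For the divergence direction I would invoke a quasi-independent Borel--Cantelli lemma (Kochen--Stone, or its variant due to Schmidt), which requires control of the pairwise correlations
\[
\mu(A_k \cap A_\ell) = \mu\bigl(C_k \cap \phi^{-(\ell-k)} C_\ell\bigr) \qquad (\ell > k).
\]
Lifting to $\H^n$ and expanding $C_k, C_\ell$ as unions of length-one-segment shadows of translates $\gamma \tilde p$, $\gamma \in \Gamma$, this correlation becomes a double sum over $(\gamma_1,\gamma_2) \in \Gamma \times \Gamma$ which, after quotienting by the diagonal $\Gamma$-action, collapses to a single sum over $\gamma = \gamma_1^{-1}\gamma_2$ with $d(\tilde p, \gamma \tilde p)$ close to $\ell - k$. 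The hyperbolic lattice point asymptotic $\#\{\gamma \in \Gamma : d(\tilde p, \gamma \tilde p) \leq R\} \sim c\, e^{(n-1)R}$ then delivers an estimate of the shape
\[
\mu(A_k \cap A_\ell) \ll \mu(A_k)\mu(A_\ell) + (\text{summable error in } \ell-k),
\]
which is enough to conclude $\mu(\limsup_k A_k) = 1$.

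The principal obstacle will be precisely this correlation estimate: one needs the lattice point counting asymptotic with an error term that is uniform and small enough so that pairs $(\gamma_1,\gamma_2)$ whose relative position is not at the ``expected'' distance $\ell-k$ contribute only a summable correction. In the non-compact finite-volume case the naive counting must additionally be refined to account for lattice points $\gamma \tilde p$ that accumulate in cusps, and this is exactly where the sharp lattice points counting results advertised in the abstract do the essential work.
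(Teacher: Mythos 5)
First, a point of reference: the paper does not actually prove this statement --- it is Maucourant's theorem, quoted verbatim from \cite{FM}; the paper only proves a discrete-time analogue by adapting Maucourant's method. With that caveat, your outline is close in spirit to the original argument: \cite{FM} also reduces matters to a second-moment/quasi-independence estimate (his Proposition 1, reproduced here as Proposition \ref{discrete prop}), and the correlation bound you sketch is exactly the combination of his two-ball geodesic measure estimate (Theorem \ref{lemma4old}) with the crude lattice-point upper bound $\#\{\gamma\in\Gamma: d(\tilde p_0,\gamma\tilde p_0)\le R\}\ll e^{(n-1)R}$. For the continuous-time statement no refined error term in the counting is needed (contrary to what your last paragraph suggests); the Lax--Phillips error term is what the present paper requires for the discrete flow, where the relevant annuli have width $\asymp r$ rather than $\asymp 1$. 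Your shadow computation $\mu(A_k)\asymp r_k^{n-1}$ and the convergence half are fine.

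Two steps in your divergence argument have genuine gaps. (1) The equivalence ``the visit set is unbounded iff $v\in A_k$ for infinitely many $k$'' is false in the direction you need: $v\in A_k$ produces a time $t\in[k,k+1]$ with $\pi(\phi^t v)\in B_k$, but $B_k\supset B_t$, so this does not certify $\pi(\phi^t v)\in B_t$. You must shrink the target in the discretization, e.g.\ set $A_k=\{v:\pi(\phi^t v)\in B_{k+1}\ \text{for some}\ t\in[k,k+1]\}$; since $r_t$ is non-increasing, $\sum_k r_{k+1}^{n-1}$ and $\int_0^\infty r_t^{n-1}\,dt$ still converge or diverge together, so nothing else changes. (2) A quasi-independence Borel--Cantelli lemma of Kochen--Stone type only yields $\mu(\limsup_k A_k)\ge \limsup_N \bigl(\sum_{k\le N}\mu(A_k)\bigr)^2/\sum_{k,\ell\le N}\mu(A_k\cap A_\ell)>0$, i.e.\ positive measure, not full measure. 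To conclude you need a zero--one argument: for instance, the set $E=\{v:\{t\ge0:\pi(\phi^tv)\in B_t\}\ \text{is unbounded}\}$ satisfies $\phi^s E\subset E$ for every $s>0$ (because $B_t\subset B_{t-s}$), hence is essentially invariant under the flow, and ergodicity of the geodesic flow gives $\mu(E)\in\{0,1\}$; alternatively, use the weak-$L^2$ convergence device of Proposition \ref{discrete prop}, which is precisely how \cite{FM} upgrades the second-moment bound to an almost-everywhere statement. With those two repairs your plan goes through and is essentially Maucourant's proof.
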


Note that Maucourant's theorem holds for the continuous-time geodesic flow on $T^1V$. Now suppose that one replaces the continuous family $(B_t)_{t\geq 0}$ by a sequence $(B_t)_{t\in \n}$, and instead of the continuous geodesic flow considers the $h$-step discrete geodesic flow $(\varphi^{ht})_{t \in \n}$ for fixed $h \in \r_+$.  %The method used in  \cite{FM} does not seem to be applicable.  
The goal of this work is to provide additional argument needed to prove the Borel-Cantelli property, assuming some restrictions on the sequence $(B_t)$.

One of the ingredients in Maucourant's proof is a counting result for the number of lattice points inside balls in $\H^n$.  %There is one established result in Margulis' thesis, where discuss the asymptotic behavior of the counting lattice point from the mixing of the geodesic flow, via an equidistribution theorem of horospheres, and we follow his lines. 
To address  a %above question 
discrete time analogue of Theorem \ref{maucorantstheorem} we use more refined lattice point counting results, namely %, recent work by Gorodnik and Nevo \cite{AA} which gives 
an error term estimate for the number of lattice points in large balls in $\H^n$. %See section \ref{counting} for detailed discussion. 

\ignore{In particular, we are interested in the following shrinking target problem for the discrete geodesic flow acting on $V$, where $V = \Gamma\backslash \H^n$ with universal cover $\tilde{V} = \H^n$. 
\begin{Q}[Shrinking target problem for discrete geodesic flows on $T^1 V$]%{Question}
Let  $\pi :T^1V \to V$  be the canonical projection, and let $(B_t)_{t \in \n}$ be a decreasing family of closed balls in $V$ centered in  a given point $p_0\in V$. Choose $h > 0$ and consider $(\varphi^{ht})_{t \in \z}$, the $h$-step geodesic %acting 
flow on the probability space $(T^1V, \mu)$; %where $m$ is the Liouville of $T^2 V$,
 we are interested in conditions on $(B_t)$ guaranteeing that 
\be 
\pi(\varphi^{ht} v) \in B_t\text{  infinitely often for $\mu$-a.e. } v \in T^1V.\label{infoften}
\ee
\end{Q}
Because of Borel-Cantelli lemma, one expects that the condition for having $\pi(\varphi^{ht} v) \in B_t$ for infinitely many $t\in\n$ would be the divergence of the series $\sum_1^{\infty} r_t^{n}$ $(\star\star)$\footnote{The difference in exponent in $(\star)$ and $(\star\star)$ is due to the fact that Maucourant deals with continuous flow, while the other problem is about iteration of a single transformation}. However it turns out that the method of Maucourant does not give such a result. }

%Let $\tilde{p_0}$ be one of the lifts of $p_0$.
% and $\tilde{B}_t$ be one of the lifts of $B_t$, such that $\tilde{p_0}$ is contained in all $\tilde{B}_t$ for $t \in \z$. 
%Fix a fundamental domain $D$ of $\tilde{V}$ for $\Gamma$ containing $\tilde{p_0}$  (see \S\ref{funddomain} for the definition), % \attention{I NOTICED THAT YOU DEFINE THE FUNDAMENTAL DOMAIN ONLY IN THE NEXT CHAPTER, THIS IS NOT GOOD. Maybe it is better to move section \ref{funddomain} to the background chapter.} 
 %and define $$i_V(\tilde{p_0})\df \sup\{r\in \r\mid B(\tilde{p_0}, r)\subset D\}.$$ Also  define \eq{defr}{R\df\min\{i_V(\tilde{p_0})/4,1,h\}.} 

%We will state in \ref{The proof of main results} that \ref{additional} is a special case of \ref{generalcondition}, and hence Theorem \ref{discrete2} implies Theorem \ref{discrete1}.
%\section{Statement of results}

We use the following notation throughout the paper:   for two non-negative functions $f$ and $g$, the notation $f(x) \ll g(x)$ means $f(x) \le C g(x)$ where $C> 0$ is a constant independent of $x$.

Here is a special case of our main result:  

\begin{Thm} \label{discrete1}
Let  $V$ be as in Theorem \ref{maucorantstheorem}, and let $(B_t)_{t \in \N}$ be a decreasing family of closed balls in $V%=\Gamma\backslash \H^n
$ %(with respect to the metric $d$) 
centered at $p_0\in V$  of radius $r_t$. Fix $h > 0$ and let $(\phi^{ht})_{t \in \n}$ be the $h$-step discrete geodesic flow. 
Then for $\mu$-almost every $v\in T^1V$,  the set
\eq{times} {
\{t \in \n: \pi(\phi^{ht} v) \in B_t\}
%\label{times}
}
is finite provided the sum
\eq{intdiscrete}{\sum_{t \in \mathbb{N}} r_t^n } converges. Also, if one  assumes that \equ{intdiscrete}  diverges and, in addition, that 
%a sufficient condition for the shrinking radius, for example when $ 0 \ll s $, 
\be
 \frac{-\ln  r_{t}}{r_t} \ll t \text{ for large enough }t,
 \label{additional}
 \ee
%up to some constants, 
then for $\mu$-almost every $v$ in $T^1V$,  the set \equ{times}
%$\{t \in \mathbb{Z}_{\geq 0}: \pi(\phi^{ht} v) \in B_t\}$ 
is infinite.% provided .
\end{Thm}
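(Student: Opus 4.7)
The proof splits into the convergence and divergence directions. Set $A_t := \phi^{-ht}(\pi^{-1}(B_t)) \subset T^1V$, so that the set \equ{times} is $\{t\in\N : v\in A_t\}$. By $\phi^h$-invariance, $\mu(A_t) = \mu(\pi^{-1}(B_t))$; since convergence of $\sum r_t^n$ forces $r_t\to 0$, the small-radius asymptotic $\mathrm{vol}_{\H^n}(B_r)\asymp r^n$ together with the Liouville disintegration on $T^1V$ gives $\mu(A_t)\ll r_t^n$. The convergence assertion then follows immediately from the classical (first-moment) Borel--Cantelli lemma.

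For divergence I would use the second-moment Borel--Cantelli lemma (Paley--Zygmund / Kochen--Stone): positive measure of $\limsup A_t$ follows as soon as $\sum_{s,t\leq N}\mu(A_s\cap A_t)\ll\bigl(\sum_{t\leq N}\mu(A_t)\bigr)^2$ for infinitely many $N$; ergodicity of the discrete flow $\phi^h$ on $(T^1V,\mu)$ then upgrades this to full measure, the set $\limsup A_t$ being $\phi^h$-invariant modulo null by the nesting of the $B_t$. The central task is therefore a correlation estimate for $\mu(A_s\cap A_t)$, $s<t$. By $\phi^h$-invariance this equals $\mu\bigl(\pi^{-1}(B_s)\cap\phi^{-L}\pi^{-1}(B_t)\bigr)$ with $L := h(t-s)$. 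Writing $V=\Gamma\backslash\H^n$ and choosing a lift $\tilde p_0$ of $p_0$, one unfolds to a sum over the group:
\[
\mu(A_s\cap A_t) \;=\; \sum_{\gamma\in\Gamma} m_{T^1\H^n}\bigl(\pi^{-1}(\tilde B_s)\cap\phi^{-L}\pi^{-1}(\gamma\tilde B_t)\bigr).
\]
Via the Liouville disintegration and the Jacobian $\sinh^{n-1}(L)$ of the radial exponential map in $\H^n$, each summand is bounded by $C\,r_s^n r_t^{n-1}/\sinh^{n-1}(L)$ and supported on the shell $\{\gamma : |d(\tilde p_0,\gamma\tilde p_0)-L|\leq r_s+r_t\}$.

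Invoking an effective lattice-point counting result of the form $\#\{\gamma\in\Gamma:d(\tilde p_0,\gamma\tilde p_0)\leq R\}=c_n e^{(n-1)R}+O(e^{(n-1-\delta)R})$ with some $\delta>0$ (the refined lattice counting alluded to in the introduction), the main term of the shell sum reproduces the product $\mu(A_s)\mu(A_t)$ -- this is what one would obtain by replacing the lattice sum by the corresponding $\H^n$-integral -- while the counting error contributes a term of order $r_s^n r_t^{n-1}e^{-\delta h(t-s)}$, yielding $\mu(A_s\cap A_t) \leq \mu(A_s)\mu(A_t) + C\,r_s^n r_t^{n-1}e^{-\delta h(t-s)}$. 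Summing over pairs, the inner tail $\sum_{k\geq 1} r_{s+k}^{n-1}e^{-\delta hk}$ reduces to a geometric series of ratio $e^{-\delta h}$ and is comparable to $r_s^{n-1}$ provided the sequence $r_t$ does not decay faster than $e^{-\delta ht}$ can absorb -- it is exactly at this step that the hypothesis \equ{additional} enters, calibrating the allowable decay of $r_t$ so that the resulting double sum of errors is dominated by $\bigl(\sum_{t\leq N}r_t^n\bigr)^2$. The main technical obstacle I anticipate is twofold: first, the delicate hyperbolic-geometric integration needed to extract the per-$\gamma$ estimate cleanly and to identify its $\Gamma$-average with $\mu(A_s)\mu(A_t)$; and second, justifying the effective lattice count in the non-compact finite-volume setting, where cusps of $V$ affect both the shape of the error term and the behaviour of the lifted balls $\tilde B_s, \tilde B_t$ near the cuspidal regions.
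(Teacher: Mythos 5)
Your convergence argument and the overall second-moment architecture for the divergence direction (unfold to a $\Gamma$-sum, per-element geometric estimate, shell counting with an error term) coincide with the paper's; the paper runs the second moment through Maucourant's Proposition \ref{discrete prop} rather than Kochen--Stone plus ergodicity, but for a decreasing family of balls these are interchangeable. However, there are two concrete problems with your correlation estimate. First, your per-$\gamma$ bound $C\,r_s^n r_t^{n-1}/\sinh^{n-1}(L)$ puts the extra power of $r$ on the wrong radius. The event is not merely that a vector based in $\tilde B_s$ points into the shadow of $\gamma\tilde B_t$: the orbit point at the exact time $L=h(t-s)$ must land in $\gamma\tilde B_t$, which restricts the basepoint to a slab of $\tilde B_s$ of thickness $\asymp\min(r_s,r_t)=r_t$. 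The correct bound (Theorem \ref{dis lemma4}, adapted from Theorem \ref{lemma4old}) is $r_s^{n-1}r_t^{n-1}\min(r_s,r_t)e^{-(n-1)d}$. This matters: with your version the ``main term'' of the shell sum is $\asymp r_s^{n+1}r_t^{n-1}$ rather than $\mu(A_s)\mu(A_t)\asymp r_s^n r_t^n$, and already for $r_t=t^{-1/n}$ one gets $\sum_{s<t\le N} r_s^{n+1}r_t^{n-1}\asymp N^{1/n}$, which swamps $\bigl(\sum_{t\le N}r_t^n\bigr)^2\asymp(\ln N)^2$, so Kochen--Stone yields nothing.

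Second, you have mis-located where hypothesis (\ref{additional}) is consumed. The geometric series $\sum_k r_{s+k}^{n-1}e^{-\delta hk}$ is $\ll r_s^{n-1}$ unconditionally, since $(r_t)$ is decreasing and bounded; so if your claimed bound $\mu(A_s\cap A_t)\le\mu(A_s)\mu(A_t)+C r_s^n r_t^{n-1}e^{-\delta h(t-s)}$ held for all pairs, no hypothesis beyond divergence would be needed --- which should have made you suspicious. The genuine difficulty sits in the near-diagonal range $h(t-s)\lesssim -\ln r$: there the shell $|d(\tilde p_0,\gamma\tilde p_0)-h(t-s)|\le r_s+r_t$ is so thin that the counting error $O\bigl(m_G(D_{h(t-s)})^q\bigr)$ from Theorem \ref{theorem7.1} exceeds the main term $O\bigl(r e^{(n-1)h(t-s)}\bigr)$, the clean estimate of Theorem \ref{dis lemma5} is unavailable, and the paper falls back on the trivial count $N_i\ll e^{(n-1)hi}$. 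This produces the near-diagonal block $\sum_s r_s^n\sum_{t=[s+C_1\ln r_s-C_2]}^{s} r_t^{n-1}$, and condition \equ{generalcondition} is exactly what is needed to dominate it by $\bigl(\sum r_t^n\bigr)^2$. The further deduction that (\ref{additional}) implies \equ{generalcondition} is Lemma \ref{thelastlemma}, a separate and not entirely trivial computation that is absent from your sketch. (Your worry about cusps, on the other hand, is a non-issue: the Lax--Phillips count holds for any lattice of finite covolume, and the balls are centered at a fixed point of $V$ with radii kept below the injectivity radius there.)
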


That is, in the terminology of \cite{CK}, the sequence $(B_t)$ is a Borel-Cantelli sequence. Note that the difference in exponents in \equ{int} and \equ{intdiscrete} is due to the fact that Theorem \ref{maucorantstheorem}, unlike Theorem \ref{discrete1}, deals with a continuous time setting.

 It is well known that the geodesic flow on $T^1V$ as above has exponential decay of correlations, see e.g.\ \cite{CM, MR}. For systems with exponential mixing similar dynamical Borel-Cantelli Lemmas have been established before. For example, it follows from 
 \cite[Theorem 4.1]{note} that the set \equ{times}
%$\{t \in \mathbb{Z}_{\geq 0}: \pi(\phi^{ht} v) \in B_t\}$ 
will be  infinite provided \eq{theircondition}{\mu(B_t) \gg \frac{\ln t}{t}.}
 or,  equivalently, $r_t \gg \left(\frac{\ln t}{t}\right)^{1/n}$. This shows that  the restriction (\ref{additional}) is weaker than the one coming from  \cite[Theorem 4.1]{note}.
For example, take 
$r_t=\frac{C}{t^{\alpha}}$, where $\alpha\leq \frac{1}{n}$. 
Then \equ{intdiscrete}  diverges, and one can write $$\frac{-\ln  r_{t}}{r_t} =\frac{-\ln  \frac{C}{t^{\alpha}}}{\frac{C}{t^{\alpha}}}=\frac{1}{C}(- \ln C+\alpha \ln t )t^{\alpha} \ll t$$
when $t$ is large enough, therefore (\ref{additional}) is satisfied. Note that in the `critical exponent' case $\alpha = 1/n$ condition \equ{theircondition} fails to hold, thus the methods of  \cite{note} are not powerful enough to treat this case. 
The same also works for 
$$r_t=\frac{C}{t^{1/n}(\ln t)^{\beta}}$$ where %$\alpha\leq \frac{1}{n}$, 
$0 < \beta \le 1$: one has $$\frac{-\ln  r_{t}}{r_t} =\frac{-\ln  \frac{C}{t^{1/n}(\ln t)^{\beta}}}{\frac{C}{t^{1/n}(\ln t)^{\beta}}} =  \frac{1}{C} t^{1/n}(\ln t)^{\beta} \left(\frac1n \ln t+ \beta \ln (\ln t) - \ln C\right) \ll t$$
for large enough $t$.
\medskip

%In particular, \cite{note} shows sBC will hold under certain conditions on the measure of nested balls. To discuss that result, we need to introduce the following notations.
%Another example of shrinking target property is given in the paper \cite{FM}:
%. In particular, he studies 
%there the BC property of geodesic rays intersecting a shrinking family of balls in a finite volume hyperbolic manifold is proved -- see Theorem \ref{maucorantstheorem} stated in the introduction; it will be discussed in detail in the next section.
 We derive Theorem \ref{discrete1} from a more general statement, Theorem \ref{discrete2}, which  involves a technical  condition \equ{generalcondition} weaker than (\ref{additional}):

%Now let us state the following stronger version of Theorem \ref{discrete1}, which is our main result:

\begin{Thm} \label{discrete2}
Let $V$ be as above, and let $(B_t)_{t \in \N}$ and $h$ be as in Theorem \ref{discrete1}.
%n be a decreasing family of closed balls in $V$ (with respect to the metric $d$), of radius $(r_t)_{t\geq0}$. 
Then for $\mu$-almost every $v\in T^1V$,  the set \equ{times}
%{\{t \in \mathbb{N}: \pi(\phi^{ht} v) \in B_t\}} 
is finite provided the sum \equ{intdiscrete} converges. Also 
there exist $C_1,C_2 > 0$ % there exist $C_3,T > 0$ 
such that 
if  \equ{intdiscrete}  diverges and, in addition, that \eq{generalcondition}{
\sum^{s}_{t=[s+ C_1\ln  r_{s}-C_2
%\frac{C\ln  r_{s}-R}{h}-2
]}r_t ^{n-1}\ll \sum_{t=1}^{s}r_t^{n} \quad\text{when $s$ is large enough},}
%\label{generalcondition}\
%\ee
%a sufficient condition for the shrinking radius, for example when $ 0 \ll s $, 
%\eq{additional}{ \frac{-\ln  r_{s}}{r_s} \ll s} for large enough $s$,
%up to some constants, 
then for $\mu$-almost every $v\in  T^1V$,  the set \equ{times}
%$\{t \in \mathbb{Z}_{\geq 0}: \pi(\phi^{ht} v) \in B_t\}$ 
is infinite.% provided .
\end{Thm}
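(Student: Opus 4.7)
\medskip

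\textbf{Proof proposal.} The plan is to run a variance-based dynamical Borel--Cantelli strategy and to implement the required quasi-independence via hyperbolic lattice point counting with error term. Writing $A_t:=\pi^{-1}(B_t)\subset T^1V$, for $t$ large the ball $B_t$ sits inside an injectivity-radius neighborhood of $p_0$, so $\mu(A_t)\asymp r_t^n$. The convergence direction is then an immediate consequence of the classical Borel--Cantelli lemma applied to the sets $\phi^{-ht}A_t$.

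For the divergence direction I would invoke a Chebyshev/Sprindzuk-type lemma: setting $E_N:=\sum_{t\le N}\mu(A_t)\to\infty$ and $\chi_t:=\mathbf{1}_{A_t}\circ\phi^{ht}$, it suffices to show
\[
\sum_{1\le s,t\le N}\bigl(\mu(A_s\cap\phi^{-h(t-s)}A_t)-\mu(A_s)\mu(A_t)\bigr)=o(E_N^2),
\]
from which $\sum_{t\le N}\chi_t\to\infty$ $\mu$-almost surely follows. The problem is thus reduced to estimating the correlations $C(s,t):=\mu(A_s\cap\phi^{-kh}A_t)$ with $k:=t-s\ge 0$.

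To handle these, lift to $\H^n$ with $V=\Gamma\backslash\H^n$ and choose a lift $\tilde p_0$ of $p_0$. Unfolding through a fundamental domain gives
\[
C(s,t)=\sum_{\gamma\in\Gamma}\mathrm{vol}\bigl\{v\in T^1\tilde B_s(\tilde p_0):\pi(\phi^{kh}v)\in\tilde B_t(\gamma\tilde p_0)\bigr\}.
\]
Only $\gamma$ with $d(\tilde p_0,\gamma\tilde p_0)\in[kh-r_s-r_t,\,kh+r_s+r_t]$ contribute, and a standard solid-angle computation in $\H^n$ bounds each summand by a constant times $r_s^n\cdot(r_t/\sinh(kh))^{n-1}\cdot(r_s+r_t)$. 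Combining this with a lattice point counting estimate with error term
\[
\#\{\gamma\in\Gamma:d(\tilde p_0,\gamma\tilde p_0)\le R\}=c\,e^{(n-1)R}+O(e^{(n-1-\delta)R}),
\]
one expects an estimate of the shape
\[
C(s,t)=\mu(A_s)\mu(A_t)+O\!\left(r_s^n\,r_t^{n-1}\,e^{-\delta kh}\right)+O\!\left(r_s^n\,r_t^{n-1}\,\mathbf{1}_{kh\le k_0}\right),
\]
where the last term accounts for the short-time regime in which the lattice counting is not yet asymptotic.

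With such a correlation estimate, the double sum splits at a threshold $k_0\asymp -\ln r_{\min(s,t)}$: on $|t-s|\ge k_0$ the main term sums to $E_N^2(1+o(1))$ and the $e^{-\delta kh}$ error sums to $o(E_N^2)$, while on $|t-s|<k_0$ the crude bound contributes at most a constant multiple of
\[
\sum_{s\le N}r_s^n\sum_{t\in[s+C_1\ln r_s-C_2,\,s]}r_t^{n-1},
\]
which is $o(E_N^2)$ thanks to \equ{generalcondition}. The main obstacle I anticipate is precisely this third step: producing a uniform correlation estimate that interpolates correctly between the mixing regime and the short-time regime, with the transition threshold matched to the window length appearing in \equ{generalcondition}, and ensuring that the $1/r$-type loss from smoothing characteristic functions of shrinking balls is absorbed by the error term in the hyperbolic lattice count.
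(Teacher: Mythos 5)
Your overall architecture --- Borel--Cantelli for convergence, unfolding over $\Gamma$, Maucourant-style measure estimates for geodesics meeting two balls, and hyperbolic lattice point counting with error term, with the transition threshold $k_0\asymp-\ln r_s$ matched to the window in \equ{generalcondition} --- is the same as the paper's. But there is a genuine gap in your probabilistic reduction. Your Chebyshev/Sprindzuk criterion requires
\[
\sum_{1\le s,t\le N}\bigl(\mu(A_s\cap\phi^{-h(t-s)}A_t)-\mu(A_s)\mu(A_t)\bigr)=o(E_N^2),
\]
and you claim that the short-range block $\sum_{s\le N}r_s^n\sum_{t\in[s+C_1\ln r_s-C_2,\,s]}r_t^{n-1}$ is $o(E_N^2)$ ``thanks to \equ{generalcondition}''. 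It is not: \equ{generalcondition} is only an $O(\cdot)$ hypothesis, bounding the inner sum by a constant times $\sum_{t\le s}r_t^n=E_s$, so the block is merely $O\bigl(\sum_{s\le N}r_s^n E_s\bigr)=O(E_N^2)$; and $\sum_{s\le N}r_s^n E_s$ is genuinely of order $E_N^2$ (it behaves like $E_N^2/2$), so no sharpening of the correlation estimates will make this $o(E_N^2)$. With only an $O(E_N^2)$ second-moment bound, Kochen--Stone/Paley--Zygmund yields $\mu(\limsup)\ge c$ for some $c>0$, not full measure, and the limsup set is not obviously flow-invariant (the targets shrink), so you cannot finish by ergodicity alone.

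The ingredient you are missing is Proposition \ref{discrete prop} (Maucourant's Proposition 1): for a \emph{decreasing} family of non-negative functions with $I_T[\mathcal{F}]\to\infty$, mere $L^2$-boundedness of $S_T[\mathcal{F}]/I_T[\mathcal{F}]$ already forces $\limsup_T S_T[\mathcal{F}]/I_T[\mathcal{F}]\ge 1$ almost everywhere, via weak $L^2$ compactness combined with the monotone (nested) structure of the targets. This is exactly why the paper only needs the second moment to be $O(I_T^2)$ --- which is what \equ{generalcondition} delivers --- and also why it never needs the two-sided asymptotic $C(s,t)=\mu(A_s)\mu(A_t)+\text{error}$ that you single out as your main obstacle: crude upper bounds on $C(s,t)$ in each regime suffice. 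If you replace your Sprindzuk-type lemma by Proposition \ref{discrete prop} and downgrade your correlation asymptotics to upper bounds, the rest of your outline closes essentially as in the paper.
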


In the next section we will reduce Theorem \ref{discrete2} to a certain $L^2$ bound,   Theorem \ref{dis lemma6}, which will be verified in \S\ref{proofs}, and in \S\ref{completion} we will deduce Theorem  \ref{discrete1} from Theorem \ref{discrete2}.

%\bigskip

\section*{Acknowledgments} The authors want to thank Dubi Kelmer, Keith Merrill, Amos Nevo, Hee Oh and the anonymous referee for useful comments.

\section{Reduction to Theorem \ref{dis lemma6}}\label{overview}

%\end{Rmk}
First note that for the divergence case of  Theorem \ref{discrete2} without loss of generality one can assume that $r_t  \to 0$ when  $t \to\infty$: indeed, if $(r_t)$ is bounded from below by a positive constant, then 
 %we are dealing with the divergence case of Theorem \ref{discrete1}, and 
 the ergodicity of the geodesic flow implies  that
\eq{infoften}{
\pi(\varphi^{ht} v) \in B_t\text{  infinitely often for $\mu$-a.e. } v \in T^1V.}
Furthermore, for a fixed $R > 0$ we can  assume that 
 $r_t \le R$  for  all $t \in \n$. Indeed, if the theorem is proved under that assumption, then applying it to the family $\{B_t : t \ge t_0\}$ where $t_0$ is such that $r_t\le R$ when $t\ge t_0$, we still recover condition \equ{infoften}.
This $R$ will be fixed later, see (\ref{defr}).

%Theorem \ref{discrete} can be deduced from the following general proposition. \textbf{explain that the continuous method can't be applied?}
%In this section we use an approach based on the shadowing property and specication.
%Ruelle recently demonstrated the power and elegance of this approach in [12], and we follow his lines.
Our proof  follows Maucourant's approach in \cite{FM}. 
Let us first introduce some terminology. Let $\mathcal{F}=(f_t)_{t \in\n}$ be a family of measurable functions on a probability space $(X,\mu)$. We call $\mathcal{F}$  decreasing   if  $f_s(x) \le f_t(x)$ for any $x\in X$ whenever 
 $s \ge t$.
 %, and call $\mathcal{F}$ {\sl positive} if $f_t \ge 0$ for all $t$. 
%Denote by $L^p (\mu) $ the space of measurable functions %of i
%whose $p$-th power is ntegrable,
% $p$-power, 
%endowed with the classical $L^p$-norm.
Also let us write $$S_T[\mathcal{F}](x)\df \sum_{t=1}^T f_t(\phi^{ht}x),\quad I_T[\mathcal{F}]\df \sum_{t=1}^T\int_X f_t \,d\mu.$$ 
%in case of a discrete family ($t \in \z$) %and  $$S_T[\mathcal{F}](x)\df \int_{0}^T f_t(\phi^{t}x)\,dt,\quad I_T[\mathcal{F}]\df \int_{0}^T \int_X f_t \,dm\,dt,$$ in case of a continuous family ($t\in \r_+$) The choice of terminology will be clear from the context. 
%\end{itemize}

%The following proposition is proved by Maucourant \cite{FM}:

We are going to use the following proposition from Maucourant's paper:

\begin{Prop} \label{discrete prop}{\rm (\cite[Proposition 1]{FM})}
Let $\mathcal{F}=(f_t)_{t\in\N}$ be a decreasing family of non-negative measurable functions on $X$ such that $f_t\in L^2(X,\mu )$ for all $t$. Assume that $\lim\limits_{T \to \infty} I_T[\mathcal{F}]=\infty$, and that $S_T[\mathcal{F}]/I_T[\mathcal{F}]$ is bounded in $L^2$-norm as $T\to\infty$. Then, as $T\to\infty$, $S_T[\mathcal{F}]/I_T[\mathcal{F}]$ converges  to  $1$ weakly in $L^2(X,\mu )$, and %there exists $c  \in [1,\infty)$, such that 
for $\mu$-almost every $x$ in $X$ one has
\eq{limsup}{
\limsup_{T \to + \infty} \frac {S_T[\mathcal{F}](x)}{I_T[\mathcal{F}]}\ge 1.
}
\end{Prop}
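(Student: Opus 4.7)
The plan is to run a weak-$L^2$ ergodic-type argument made available by the decreasing structure of $\mathcal{F}$, together with a stratification trick for the pointwise limsup bound. First I would observe that $\phi^h$ preserves $\mu$, so $\int S_T[\mathcal{F}]\,d\mu = I_T[\mathcal{F}]$ and hence $\int (S_T[\mathcal{F}]/I_T[\mathcal{F}])\,d\mu = 1$. Combined with the hypothesis that $S_T[\mathcal{F}]/I_T[\mathcal{F}]$ is bounded in $L^2$, Banach--Alaoglu extracts weakly convergent subsequences, and any weak limit $g \in L^2(X,\mu)$ automatically satisfies $\int g\,d\mu = 1$.

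Next I would show that any such limit $g$ is invariant under the Koopman operator $U\colon F \mapsto F \circ \phi^h$, and this is the step where the monotonicity of $\mathcal{F}$ is essential. Using $f_t \ge f_{t+s}$ pointwise for $s \ge 0$,
$$S_T[\mathcal{F}](\phi^{hs}x) = \sum_{t=1}^T f_t(\phi^{h(t+s)}x) \ge \sum_{t=1}^T f_{t+s}(\phi^{h(t+s)}x) = S_{T+s}[\mathcal{F}](x) - S_s[\mathcal{F}](x).$$
Dividing by $I_T[\mathcal{F}]$ and taking weak limits along a weakly convergent subsequence yields $U^s g \ge g$ for every $s \ge 0$, provided one verifies $I_{T+s}[\mathcal{F}]/I_T[\mathcal{F}] \to 1$ and $S_s[\mathcal{F}]/I_T[\mathcal{F}] \to 0$ weakly. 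Both follow from the decreasing hypothesis: since $t \mapsto \int f_t\,d\mu$ is decreasing and $I_T[\mathcal{F}] \to \infty$, one has $\int f_T\,d\mu \le I_T[\mathcal{F}]/T$, so the increment $I_{T+s}[\mathcal{F}] - I_T[\mathcal{F}] \le s\int f_T\,d\mu = o(I_T[\mathcal{F}])$, while $S_s[\mathcal{F}]$ is a fixed $L^2$ function. Since $U^s$ is measure-preserving, integrating $U^s g - g \ge 0$ gives zero, so $U^s g = g$ a.e. Ergodicity of $\phi^h$ (which in the intended application to the discrete geodesic flow is available) then forces $g$ to be constant, and hence $g \equiv 1$; the uniqueness of the subsequential limit upgrades to full weak convergence $S_T[\mathcal{F}]/I_T[\mathcal{F}] \rightharpoonup 1$.

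For the pointwise inequality \eqref{eq:limsup} I would argue by contradiction: suppose for some $\varepsilon > 0$ the set $A_\varepsilon \df \{x : \limsup_T S_T[\mathcal{F}](x)/I_T[\mathcal{F}] \le 1 - \varepsilon\}$ has positive measure. Stratify by $A_{\varepsilon,N} \df \{x \in A_\varepsilon : S_T[\mathcal{F}](x)/I_T[\mathcal{F}] \le 1 - \varepsilon \text{ for all } T \ge N\}$, so that $A_{\varepsilon,N} \nearrow A_\varepsilon$. For each fixed $N$ and every $T \ge N$, $\int_{A_{\varepsilon,N}} S_T[\mathcal{F}]/I_T[\mathcal{F}]\,d\mu \le (1-\varepsilon)\mu(A_{\varepsilon,N})$; passing $T \to \infty$ using weak convergence against $\mathbf{1}_{A_{\varepsilon,N}}$ yields $\mu(A_{\varepsilon,N}) \le (1-\varepsilon)\mu(A_{\varepsilon,N})$, forcing $\mu(A_{\varepsilon,N}) = 0$, hence $\mu(A_\varepsilon) = 0$. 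Letting $\varepsilon \to 0$ completes the proof.

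The main obstacle I anticipate is the invariance step: one must quantify $I_{T+s}[\mathcal{F}]/I_T[\mathcal{F}] \to 1$ correctly, and this quantification is precisely where the decreasing hypothesis is used in a nontrivial way --- without it, the relative increment of $I_T$ over a fixed block of length $s$ could be of order $1$ and the weak-limit identification $S_{T+s}[\mathcal{F}]/I_T[\mathcal{F}] \rightharpoonup g$ would collapse. The remaining bookkeeping (existence of weak limits, the stratification argument) is standard once this is in hand.
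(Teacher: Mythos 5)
The paper itself gives no proof of this proposition: it is imported verbatim (in discrete form) from Maucourant \cite[Proposition 1]{FM}, so there is nothing in-paper to compare against; your argument follows what is essentially Maucourant's own scheme (weak compactness from the $L^2$ bound, identification of every weak cluster point as the constant $1$ via approximate invariance coming from monotonicity and ergodicity, then the pointwise $\limsup$ by testing weak convergence against the sets where the ratio stays below $1-\varepsilon$), and it is essentially correct. Two points need tightening. First, in the invariance step the two verifications you list ($I_{T+s}[\mathcal{F}]/I_T[\mathcal{F}]\to 1$ and $S_s[\mathcal{F}]/I_T[\mathcal{F}]\to 0$) do not by themselves yield $U^sg\ge g$: along the subsequence $T_k$ with $S_{T_k}[\mathcal{F}]/I_{T_k}[\mathcal{F}]\rightharpoonup g$, the right-hand side involves $S_{T_k+s}[\mathcal{F}]/I_{T_k}[\mathcal{F}]$, whose weak limit is a priori some other cluster point. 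The repair is exactly the estimate you already wrote down for a different purpose: $\|S_{T_k+s}[\mathcal{F}]-S_{T_k}[\mathcal{F}]\|_{L^1}=I_{T_k+s}[\mathcal{F}]-I_{T_k}[\mathcal{F}]\le s\,I_{T_k}[\mathcal{F}]/T_k=o(I_{T_k}[\mathcal{F}])$, which together with the $L^2$ boundedness forces $S_{T_k+s}[\mathcal{F}]/I_{T_k}[\mathcal{F}]\rightharpoonup g$ along the same subsequence; this should be said explicitly. Second, with your non-strict definition of $A_\varepsilon$ the sets $A_{\varepsilon,N}$ need not exhaust $A_\varepsilon$ (a $\limsup$ bounded by $1-\varepsilon$ only gives $S_T[\mathcal{F}]/I_T[\mathcal{F}]\le 1-\varepsilon/2$ for large $T$), but replacing $\varepsilon$ by $\varepsilon/2$ in the stratification fixes this trivially. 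Finally, you are right that ergodicity of $\phi^h$ is not among the stated hypotheses but is indispensable --- the constant family $f_t=\mathbf{1}_A$ with $A$ invariant of measure $1/2$ defeats both conclusions --- so it is tacitly assumed in \cite{FM} and is available for the geodesic flow.
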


We note that the above proposition was stated in \cite{FM} for the case of a continuous family of functions, but it is immediate to deduce a discrete version. 
To prove Theorem \ref{discrete2}, we %need to 
will apply Proposition \ref{discrete prop} 
%specify a family $f_t$ which is related to the shrinking targets.% and satisfies the assumption in Proposition \ref{discrete prop}. 
%Namely, for  $t \in \z_{\geq 0}$, we define $f_t$ to be 
to the family of characteristic functions of $B_t$, i.e.\ take
\eq{defft}{
f_t(v)=\left\{
  \begin{array}{ll}
    1  & \hbox{if $d(\pi(%\phi^{ht}
     v),p_0) \le r_t$}  \\
    0  & \hbox{otherwise.}
  \end{array}
\right.
}
%In addition, we define $g_t$ to be the characteristic function of $\tilde{B}_t$. Thus, the lift $\tilde{f}_t$ of $f_t$ to $T^1\tilde{V}$ satisfies
%\[
%\tilde{f}_t=\sum_{\gamma \in \Gamma} g_t \circ \gamma .
%\]
%}
It is decreasing because the family of balls $B_t$ is nested, and clearly 
%. Moreover, it can be easily seen that $\int_{T^1V} f_t \,d\mu$ is equivalent to $r_t^{n}$ up to a multiplicative constant depending on $n$ %and $R$ 
%only, and thus 
$I_T[\mathcal{F}]$ is equivalent, up to a multiplicative constant, to $\sum_{t=1}^T r_t^n$. %\sum_{t=1}^T\int_X f_t d\mu=
%\smallskip
Also it is clear that the conclusion \equ{limsup} of Proposition \ref{discrete prop}  implies that the set \equ{times}
%$\{t \in \mathbb{Z}_{\geq 0}: \pi(\phi^{ht} v) \in B_t\}$ 
is infinite. Since the convergence case of  Theorems  \ref{discrete1} and \ref{discrete2} immediately follows from the Borel-Cantelli Lemma, we can see that 
 Theorem \ref{discrete2} can be reduced to proving a uniform $L^2$ bound for $S_T[\mathcal{F}]/I_T[\mathcal{F}]$, which is the subject of the following theorem:

%, which will allow us to get our main result:

\begin{Thm} \label{dis lemma6} Let $\mathcal{F}=(f_t)_{t\in\N}$ be as in \equ{defft}. Then 
there exist $C_1,C_2 > 0$ such that
if $I_T[\mathcal{F}]$ diverges when $T$ goes to $\infty$ and 
condition \equ{generalcondition} holds,
%{\sum^{s}_{t=[s+\frac{c_{{4}}\ln  r_{s}-R}{h}-2]}r_t ^{n-1}\ll \sum_{t=1}^{s}r_t^{n} \quad\text{when $s$ is large enough},}
%up to some constants, when $0 \ll s $ ,
then the $L^2$-norm of $S_T[\mathcal{F}]/I_T[\mathcal{F}]$ is bounded for all $T \ge 1$.
 \end{Thm}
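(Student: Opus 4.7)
The plan is to expand $\|S_T\|_2^2$. Since each $f_t$ is $\{0,1\}$-valued, the diagonal gives exactly $I_T[\mathcal{F}]$, and by the $\phi^{ht}$-invariance of $\mu$ the off-diagonal equals $2\sum_{1 \le s < t \le T} C_{s,t}$, where
\[
C_{s,t} := \int f_s \cdot (f_t \circ \phi^{h(t-s)}) \, d\mu.
\]
The goal is to show $\sum_{s<t} C_{s,t} \ll I_T[\mathcal{F}]^2$; the dominant term should behave like $\mu(B_s)\mu(B_t)$, summing to $I_T[\mathcal{F}]^2$, with an error term that the condition \equ{generalcondition} is tailored to absorb.

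To estimate $C_{s,t}$, I would pass to the universal cover: write $V = \Gamma \backslash \H^n$, fix a lift $\tilde p_0$ of $p_0$, and (shrinking the fixed upper bound $R$ on the radii below the injectivity radius at $p_0$) express $C_{s,t}$ as a sum over $\gamma \in \Gamma$ of local integrals $I_\gamma$, each measuring the Liouville mass of $(p,\theta) \in T^1\tilde B_s$ with $\exp_p(hk\theta) \in \gamma \tilde B_t$, where $k = t-s$. Only $\gamma$ with $d(\tilde p_0, \gamma \tilde p_0) \in [hk-r_s-r_t,\, hk+r_s+r_t]$ contribute. Using the Jacobian $\sinh^{n-1}(hk)$ of the exponential map, together with the thin-slab volume bound $\mathrm{vol}\{p\in\tilde B_s : |d(p,\gamma \tilde p_0) - hk|\le r_t\} \ll r_s^{n-1}r_t$ and the spherical-cap area bound $\mathrm{Area}(S(p,hk)\cap\gamma \tilde B_t)\ll r_t^{n-1}$, one gets $I_\gamma \ll r_s^{n-1}r_t^n e^{-(n-1)hk}$. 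Applying the refined hyperbolic lattice-point count $N(R)=c\,e^{(n-1)R}+O(e^{\kappa R})$ for some $\kappa<n-1$ bounds the number of relevant $\gamma$ by $\ll r_s e^{(n-1)hk} + e^{\kappa hk}$. Combining yields the key estimate
\[
C_{s,t} \ll r_s^n r_t^n + r_s^{n-1}r_t^n e^{-\delta hk}, \qquad \delta := n-1-\kappa > 0.
\]

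The main term sums trivially to $\ll I_T[\mathcal{F}]^2$. For the error term, choose $C_1\ge 1/(\delta h)$ and split the $(s,t)$-sum at the threshold $k = C_1|\ln r_s| + C_2$. When $k$ exceeds the threshold, $e^{-\delta hk} \le r_s$ up to a constant, so the error is dominated by $r_s^n r_t^n$ and again sums to $\ll I_T[\mathcal{F}]^2$. When $k$ is below the threshold, drop the exponential and swap the order of summation: since $(r_t)$ is decreasing, $|\ln r_s|\le |\ln r_t|$ for $s\le t$, so the induced inner sum over $s$ is contained in $[t + C_1\ln r_t - C_2,\, t-1]$, which is exactly the window in \equ{generalcondition}. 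Hence
\[
\sum_{\substack{s<t\\ t-s\le C_1|\ln r_s|+C_2}} r_s^{n-1}r_t^n \ll \sum_t r_t^n \sum_{s=[t+C_1\ln r_t-C_2]}^{t}r_s^{n-1} \ll \sum_t r_t^n \cdot I_t[\mathcal{F}] \le I_T[\mathcal{F}]^2,
\]
the middle inequality being \equ{generalcondition}. Assembling the three pieces yields $\|S_T\|_2^2 \ll I_T[\mathcal{F}]^2$. The main technical obstacle will be the careful hyperbolic-geometry estimates that give the per-lattice-element bound (thin slab plus spherical cap) and the importing of a quantitative lattice-point-counting result with an explicit error exponent $\kappa<n-1$; once these are in place, the constants $C_1,C_2$ in \equ{generalcondition} are forced by $\delta h C_1\ge 1$, and the condition is precisely what is needed to close the argument.
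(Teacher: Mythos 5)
Your proposal is correct and follows essentially the same route as the paper: expand $\|S_T[\mathcal{F}]\|_2^2$, lift to $\H^n$ and sum over $\Gamma$, bound each lattice-element contribution by $r_s^{n-1}r_t^{n}e^{-(n-1)hk}$ (the paper's Theorem \ref{dis lemma4}, derived from Maucourant's two-ball estimate), count the relevant $\gamma$ via the Lax--Phillips error term (Theorems \ref{theorem7.1} and \ref{dis lemma5}), and split at the threshold $k\asymp C_1|\ln r_s|$ so that condition \equ{generalcondition} absorbs the sub-threshold window. Your choice $C_1\ge 1/(\delta h)$ with $\delta=(1-q)(n-1)$ matches the paper's $C_1=c_4/h$, and the remaining discrepancies (organizing by $k=t-s$ rather than by the shells $\Gamma_i$, and re-deriving the two-ball bound via a thin-slab/spherical-cap argument) are only cosmetic.
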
 
%In fact, the above theorem and the following proposition will be used to derive Theorem \ref{discrete2}. See section \ref{The proof of main results} for the proof.

\section{Proof of Theorem \ref{dis lemma6}}\label{proofs}

%\subsection{Proofs for Theorem \ref{dis lemma6}}
To prove Theorem \ref{dis lemma6}, following the same methodology as in \cite{FM}, we will apply a result on counting lattice points stated below (Theorem \ref{dis lemma5}) together with a measure estimate for the space of discrete geodesics (Theorem \ref{dis lemma4}).

\subsection{Counting lattice points}\label{counting}
%To  prove Theorem  \ref{dis lemma6}, the basic idea is to use effective counting results for lattice points in the hyperbolic space. We will c
Write $T^1V = \Gamma\backslash G$, where $\Gamma$ is a lattice in $G = \SO(n,1)$, the isometry group of $\widetilde V = \H^n$. Choose a lift $\tilde p_0\in\H^n$ of $p_0$ and for $r > 0$ %and $\tilde{p_0}\in \tilde{V}$, 
and $i\in \n$, let us denote 
\[
%\label{gammahat}
\hat{\Gamma}_i(r)\df \big\{\gamma \in \Gamma : d(\tilde{p_0},\gamma \tilde{p_0}) \in(hi-r,hi+r]\big\}.
\]

%Let $G = \SO(n,1)$. 
%This theorem will be applied in Section \ref{3.2.2}.
Then  $$\# \hat{\Gamma}_i (r)  =\#(\Gamma \cap D_{hi+r} ) - \#( \Gamma \cap D_{hi-r}), $$ where $$D_t=\{g \in G:  d(g   \tilde{p_0}, \tilde{p_0}) \leq t\}.$$

An estimate for $\#\hat{\Gamma}_i (r)$ would follow from a reasonable estimate for the error term in the asymptotics of the size of $\Gamma \cap D_t$ for large $t$. Such estimates are due to Huber \cite{huber} for $n=2$ and to Selberg for the general case, see \cite{LF}, and also \cite{BO, AA} for more recent results of this flavor. Denote by  $m_G$ the Haar measure on $G$ which locally projects onto $\mu$.
The following is a consequence of \cite[Theorem 1]{LF}:
%In fact, we are going to show the H\"older well-roundedness of $D_t$, which requires the following theorem.% is useful to show that $D_t$ is H\"older well-round.

% we will introduce the following definitions and Theorem \ref{Theorem 1.9}.
%\smallskip

%Now, here is the theorem, which tells us that is 

%\attention{About Theorem 4.1.5. It is not good to define important things inside a theorem. So the distance d on G should be defined before the theorem and before saying that $O_\varepsilon$ are balls. In fact I have a suggestion: since $O_\varepsilon$ are always balls in your paper, why not simply define them as balls to begin with? this will make the exposition much easier to understand.}

%\begin{Def}\label{associate space}
%Let $G$ be a connected semi simple group with finite center and $X$ the corresponding symmetric space equipped with the Cartan-Killing metric $d$. 

%\end{Def}

%\attention{should I mention their definition of $D_t$ and say something about it is the same as ours, up to some $g \in G$}
\begin{Thm} \label{theorem7.1} There exist  constants $0 < q < 1$ and $ t_1, c_{{1}} > 0$ such that %for any $\tau>0$, we have that 
$$| \# (\Gamma \cap D_t )- m_G(D_t)| \leq %A(m_G(D_t))^{1-\frac{a} {4(\rho+ a)}}=
 c_{{1}}m_G(D_t)^{q},$$
%=O_{\tau}(m_G(D_t))^{1-\zeta+\tau}$, where $\zeta = \min\{1, (\eta(1 + \dim G))^{-1}\}$.}
for all $t > t_1$. \end{Thm}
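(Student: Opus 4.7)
The plan is to extract this as a consequence of the asymptotic expansion in \cite[Theorem~1]{LF}, which provides a formula of the form
$$\#(\Gamma \cap D_t) = c_0 e^{(n-1)t} + \sum_{i} c_i e^{s_i t} + O(e^{\sigma t}),$$
where the exponents $s_i$ come from the finitely many exceptional (small) eigenvalues of the Laplacian on $\Gamma \backslash \H^n$, each satisfying $(n-1)/2 < s_i < n-1$, and $\sigma < n-1$ bounds the tail coming from the continuous spectrum. The first step is to rewrite $m_G(D_t)$ in a compatible form: since $G = \SO(n,1)$ acts transitively on $\H^n$ with compact stabilizer $K$, the ball $D_t$ is a $K$-principal bundle over the hyperbolic ball of radius $t$ around $\tilde p_0$, so $m_G(D_t)$ is a constant multiple of the hyperbolic volume $\operatorname{vol}_{\H^n}(B(t))$. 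The standard expansion $\operatorname{vol}_{\H^n}(B(t)) = c'_0 e^{(n-1)t}\bigl(1 + O(e^{-2t})\bigr)$ then yields $m_G(D_t) = c_0 e^{(n-1)t} + O(e^{(n-3)t})$, with the same leading coefficient $c_0$ as in the counting formula provided one matches the normalization convention that $m_G$ locally projects to $\mu$.

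Next I would subtract: the leading terms cancel, leaving
$$\bigl|\#(\Gamma \cap D_t) - m_G(D_t)\bigr| \ll e^{s_* t}, \qquad s_* := \max\bigl\{\max_i s_i,\; \sigma,\; n-3\bigr\} < n-1,$$
for $t$ sufficiently large. Choosing any $q \in \bigl(\tfrac{s_*}{n-1},\,1\bigr)$ and using $m_G(D_t) \gg e^{(n-1)t}$, one converts this into $e^{s_* t} \ll m_G(D_t)^q$, yielding the claimed bound for a suitable $c_1$ and all $t > t_1$.

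I expect the only real obstacle to be bookkeeping: one must carefully reconcile the normalization conventions in \cite{LF} (where the counting is phrased in terms of hyperbolic distance on $\H^n$) with the Haar-measure formulation on $G = \SO(n,1)$, so that the leading coefficients in the two asymptotics genuinely coincide and cancel in the subtraction. Once that is done, the conversion from an $e^{s_* t}$ error term to an $m_G(D_t)^q$ error term is a one-line consequence of the exponential volume growth of $D_t$.
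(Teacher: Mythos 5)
Your derivation is correct and follows exactly the route the paper intends: the paper states Theorem \ref{theorem7.1} without proof, merely as ``a consequence of \cite[Theorem 1]{LF}'', and your argument --- cancelling the leading term of the Lax--Phillips expansion against the volume asymptotics of $m_G(D_t)$ and absorbing the exceptional-eigenvalue and remainder contributions into $m_G(D_t)^q$ for any $q$ strictly between $s_*/(n-1)$ and $1$ --- is the standard way to extract the stated bound from that citation. The normalization issue you flag is the only real bookkeeping, and it is settled by the paper's convention that $m_G$ locally projects onto the probability measure $\mu$, which forces the two leading coefficients to coincide.
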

%or I can say 
%\attention{For a constant $\tilde \eta$, $| \# \{\Gamma \cap D_t \}- m_G(D_t)| = O_{\tilde \eta}(m_G(D_t))^{1-\tilde \eta}$}

%\begin{Rmk} \label{remarktheorem7.1} 
% \cite[Theorem 7.1]{AA} is derived from the spectral gap of the $G$-action on $\ggm$. Note that in \cite{AA} it is stated for a fixed base point $\tilde p_0$, namely for the identity coset. However by replacing $\Gamma$ with its conjugate one can deduce Theorem \ref{theorem7.1} with constants $q$ and $ c_{{1}}$ depending on the choice of $\Gamma$ and $\tilde p_0$. 

An important property of the family $\{D_t\}$ is so-called H\"older well-roundedness, see \cite{AA}. In particular %\attention{ 
the following is true:

\begin{Prop} \label{basic facts} There exist   $t_2, c_{{2}}, c_{{3}} > 0$ such that:
\begin{itemize}
\item[\rm (i)] For any $\varepsilon <1$ and $t>t_2$,  we have that
\eq{holder}{
m_G(D_{t+\varepsilon})-m_G(D_{t-\varepsilon}) \leq c_{{2}} \varepsilon  m_G(D_{t-\varepsilon}).}
%\label{holder}
%\ee
\item[\rm (ii)] For any $t>0$,
 \eq{mbond}{ m_G(D_{t}) \leq c_{{3}} e^{(n-1)t}.}
\end{itemize}
\end{Prop}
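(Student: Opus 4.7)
The plan is to pass to polar (KAK) coordinates on $G=\SO(n,1)$. If $K\subset G$ denotes the stabilizer of $\tilde p_0$ (a maximal compact subgroup) and $a_s$ the one-parameter subgroup translating $\tilde p_0$ along a fixed unit-speed geodesic, then every $g\in G$ admits a decomposition $g=k_1 a_s k_2$ with $s=d(g\tilde p_0,\tilde p_0)\ge 0$, and the Haar measure disintegrates as $dk_1\,\sinh^{n-1}(s)\,ds\,dk_2$ (up to normalization). Consequently, there is a positive constant $c$ with
\[
m_G(D_t) \;=\; c\int_0^t \sinh^{n-1}(s)\,ds.
\]
Everything will reduce to a one-variable computation with this density.

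For part (ii), I would simply note that $\sinh(s)\le \tfrac{1}{2}e^{s}$ for all $s\ge 0$, so
\[
m_G(D_t) \;\le\; \frac{c}{2^{n-1}}\int_0^t e^{(n-1)s}\,ds \;\le\; \frac{c}{2^{n-1}(n-1)}\,e^{(n-1)t},
\]
which gives \equ{mbond} with $c_3:=c/\bigl(2^{n-1}(n-1)\bigr)$ (and handling small $t\ge 0$ by adjusting the constant, using the continuity of $m_G(D_t)$ at $0$).

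For part (i), since $\sinh^{n-1}$ is increasing on $(0,\infty)$,
\[
m_G(D_{t+\varepsilon})-m_G(D_{t-\varepsilon}) \;=\; c\int_{t-\varepsilon}^{t+\varepsilon}\sinh^{n-1}(s)\,ds \;\le\; 2c\varepsilon\,\sinh^{n-1}(t+\varepsilon).
\]
I would then estimate the denominator from below: since $\sinh(s)\ge \tfrac12 e^{s}(1-e^{-2s})$, there exists $t_0$ so that for $s\ge t_0$ one has $\sinh^{n-1}(s)\ge \tfrac{1}{2^{n}}e^{(n-1)s}$, and hence
\[
m_G(D_{t-\varepsilon}) \;\ge\; \frac{c}{2^n}\int_{t_0}^{t-\varepsilon} e^{(n-1)s}\,ds \;\gg\; e^{(n-1)(t-\varepsilon)}
\]
for $t\ge t_2$ large enough (with $t_2>t_0+1$, so that $t-\varepsilon>t_0$). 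Comparing the two estimates and using $\sinh^{n-1}(t+\varepsilon)\le e^{(n-1)(t+\varepsilon)}/2^{n-1}$ yields
\[
\frac{m_G(D_{t+\varepsilon})-m_G(D_{t-\varepsilon})}{m_G(D_{t-\varepsilon})} \;\ll\; \varepsilon\, e^{2(n-1)\varepsilon} \;\le\; \varepsilon\, e^{2(n-1)},
\]
which is \equ{holder} with an explicit $c_2$.

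The only mildly delicate point is locking down $t_2$ large enough that the asymptotic $\sinh(s)\sim e^{s}/2$ can be used uniformly in both directions — i.e.\ both to cap the numerator and to extract a matching exponential from the denominator, so that the ratio of the two does not depend on $t$. Everything else is a one-dimensional calculation with the explicit density $\sinh^{n-1}(s)$, and the boundedness of $\varepsilon$ by $1$ lets us absorb the $e^{2(n-1)\varepsilon}$ factor into the constant $c_2$.
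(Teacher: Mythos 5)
Your argument is correct, but it is worth noting that the paper does not actually prove this proposition: it is stated as a consequence of the H\"older well-roundedness results of Gorodnik and Nevo \cite{AA} (and, for part (ii), of standard volume asymptotics), with no proof supplied. What you have done is give a direct, self-contained verification in the rank-one case at hand. Since $D_t$ is bi-$K$-invariant for $K=\mathrm{Stab}_G(\tilde p_0)$, the Cartan decomposition $dg = dk_1\,\sinh^{n-1}(s)\,ds\,dk_2$ for $G=\SO(n,1)$ reduces everything to the one-variable density $\sinh^{n-1}(s)$ (equivalently, $m_G(D_t)$ is proportional to the hyperbolic volume of the ball of radius $t$ in $\H^n$), and your two-sided comparison of $\sinh^{n-1}(s)$ with $2^{-(n-1)}e^{(n-1)s}$ for $s$ bounded away from $0$ yields both \equ{mbond} for all $t>0$ and \equ{holder} for $t>t_2$, with the factor $e^{2(n-1)\varepsilon}\le e^{2(n-1)}$ absorbed into $c_2$ because $\varepsilon<1$. (For part (ii) you do not even need to adjust constants for small $t$, since $\int_0^t e^{(n-1)s}\,ds\le e^{(n-1)t}/(n-1)$ already holds for every $t>0$.) The trade-off between the two routes is the usual one: the citation to \cite{AA} covers general semisimple groups and more general families of sets, where no explicit polar density is available, while your computation is elementary, produces explicit constants $t_2,c_2,c_3$, and makes transparent why the H\"older exponent here is actually Lipschitz in $\varepsilon$.
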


%we have that
%there exists $t_0$ such that % \label{mbond} \ee
%state what you need for the measure of $D_t$

From the two statements above one can easily derive the following estimate:
%\hl{change the following}
\begin{Thm} \label{dis lemma5}% \attention{change to $r_1,r_2$} 
There exist constants $c_{{4}},c_5$ with the following property:  
if $0 < r  <1$ and $i\in\n$ are such that $$hi \geq \max(-c_{{4}} \ln r , r+ %C_{a,c',\varepsilon_1,n,\Gamma}
t_0), $$ 
where $t_0 = \max(t_1,t_2)$,
% is as in Proposition \ref{basic facts},
%\myquestion{old: Assuming that $-\ln( \max(r_1,r_2)) \leq C\cdot hi$, where $C$ is a constant%\attention{DON"T UNDERSTAND WHAT IT MEANS}.
%, there exist constants $t_0$ and $c_{{4}} =c_{{4}} (\Gamma,C )> 0$%\attention{DOES THIS CONSTANT DEPEND ON OTHER CONSTANTS YOU MENTIONED BEFORE? that is, did you mean that $-\ln( \max(r,r')) \leq C hi$? then does $c_{{4}}$ depend on $C$?} 
%, such that for $hi \geq t_0$ one has}
then 
%and any $\tilde{p_0}$ in $\tilde{V}$
% and $d=d(\tilde{p_0},\gamma \tilde{p_0})  \geq t_0$,\textbf{why this assumption?}\textbf{$d_t$ didn't appear and what is i?}
\[
\#  \hat{\Gamma}_i(r)   \le c_5 r e^{(n-1) hi}.
\]
%\attention{Furthermore, one can take $t_0 = \dots$}
\end{Thm}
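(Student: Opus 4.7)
The plan is to start from the identity $\#\hat\Gamma_i(r)=\#(\Gamma\cap D_{hi+r})-\#(\Gamma\cap D_{hi-r})$ and apply Theorem \ref{theorem7.1} to both terms separately. The hypothesis $hi\ge r+t_0$ guarantees that $hi-r\ge t_0\ge\max(t_1,t_2)$, so both the counting estimate of Theorem \ref{theorem7.1} and the well-roundedness inputs of Proposition \ref{basic facts} apply at the radii $hi\pm r$. Combining these gives
\[
\#\hat\Gamma_i(r)\le\bigl(m_G(D_{hi+r})-m_G(D_{hi-r})\bigr)+c_1\bigl(m_G(D_{hi+r})^q+m_G(D_{hi-r})^q\bigr).
\]

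Next I would estimate the two pieces. For the main (volume-difference) term, I apply Proposition \ref{basic facts}(i) with $t=hi$ and $\varepsilon=r<1$, and then Proposition \ref{basic facts}(ii), to obtain
\[
m_G(D_{hi+r})-m_G(D_{hi-r})\le c_2 r\, m_G(D_{hi-r})\le c_2c_3 r\,e^{(n-1)hi}.
\]
This produces exactly the shape of the desired bound and contributes the constant $c_2c_3$ to $c_5$.

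The key step, which is the real obstacle, is to absorb the error term $c_1 m_G(D_{hi+r})^q$ into the target $c_5 r e^{(n-1)hi}$. Using Proposition \ref{basic facts}(ii) and $r<1$,
\[
m_G(D_{hi+r})^q\le c_3^q e^{q(n-1)(hi+r)}\le c_3^q e^{q(n-1)}\,e^{q(n-1)hi},
\]
so I need $e^{q(n-1)hi}\le C\,r\,e^{(n-1)hi}$, equivalently $-\ln r\le\ln C+(1-q)(n-1)\,hi$. This is precisely what the hypothesis $hi\ge -c_4\ln r$ is designed to provide: choosing
\[
c_4\df\frac{1}{(1-q)(n-1)}
\]
(and using $-\ln r>0$ since $r<1$) gives $(1-q)(n-1)hi\ge-\ln r$, hence $e^{q(n-1)hi}\le r\,e^{(n-1)hi}$, so the error term is bounded by a multiple of $r\,e^{(n-1)hi}$. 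A symmetric (easier) bound handles $m_G(D_{hi-r})^q$.

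Finally, I would add the contributions of the main term and the two error terms, collect the multiplicative constants into a single $c_5$ depending only on $n$, $q$, $c_1$, $c_2$, $c_3$, and conclude $\#\hat\Gamma_i(r)\le c_5 r\,e^{(n-1)hi}$, as required. The only delicate point is the calibration of $c_4$ in terms of the exponent $q$ from Theorem \ref{theorem7.1}; once that exponent-balancing is done, everything else is a direct substitution.
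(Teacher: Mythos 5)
Your proposal is correct and follows essentially the same route as the paper: the same decomposition into the volume difference plus the $q$-power error terms, the same use of Theorem \ref{theorem7.1} and Proposition \ref{basic facts}, and the identical calibration $c_4=\frac{1}{(1-q)(n-1)}$ to absorb $e^{q(n-1)hi}$ into $r\,e^{(n-1)hi}$. No substantive differences.
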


\begin{proof}[Proof of Theorem \ref{dis lemma5}]
%\attention{FIX IT!}
%By taking $\tau=\frac{\zeta}{2}>0$, Theorem \ref{theorem7.1} tells us that there exists $t_0$ and $c_{{2}}$, such that 
%will take $\tau=\frac{1}{4}$. %, $B_t=D_t$.
%Then there exists constants $C_7$ and $A$, such that 
%By the inequality (\ref{eq5}) in  we have that t_0
%$$\left| \frac{| \Gamma \cap D_t |}{m_G(D_t)}-1 \right| \leq A m_G(D_t)^{-\frac{a} {4(\dim G+ a)}},$$ 
%And we know that $\rho=\dim(\SO(n,1))=\frac{n(n+1)}{2}$, s
%hence 
%$$\big| \#\{\Gamma \cap D_t\}-m_G(D_t) \big| \leq %A(m_G(D_t))^{1-\frac{a} {4(\rho+ a)}}=
% c_{6}(m_G(D_t))^{1-\frac{\zeta}{2}} \text{ for all } t>c_{{2}}.$$
%\new{Need to fix the theorem first!}
%
%Therefore, for $hi-\max(r_1,r_2)>c_{{2}}$, we have that
Applying Theorem \ref{theorem7.1} for all $i$ with $hi-r>t_0$, we get that 
%\textbf{what is N?} 
\begin{align*}
%N(d_t+\max(r,r'))&=
\# (\Gamma \cap D_{hi+r}) \le m_G(D_{hi+r})+  c_{{1}} m_G(D_{hi+r})^{q}
\end{align*}
and
\begin{align*}
%N(d_t-r)&=
\#(\Gamma \cap D_{hi-r})>m_G( D_{hi-r})-  c_{{1}} m_G(D_{hi-r})^{q}.
\end{align*}%, and $c_{{q},\eta}=1-{q}+\eta$, where $\eta \in (0,\infty)$. 
%Then by the Theorem \ref{Theorem 1.9}, we have that $e^{\delta d_t}+Ce^{\delta R\frac{4n+7}{4(n+2)}}\leq N(d_t)\leq e^{d_t}-Ce^{\delta R\frac{4n+7}{4(n+2)}}$, and $N( R+r)\leq e^{\delta (d_t+r)}+Ce^{(\delta (d_t+r))\frac{4n+7}{4(n+2)}}$
%Notice that $0<\zeta<1$ denote ${q}=1-\frac{\zeta}{2}<1$, then get the following:
Therefore, by \equ{holder} and \equ{mbond},  %when $hi-r>t_0$, 
we have: % that
\begin{eqnarray*}
&& \#\{\gamma \in \Gamma : d(\tilde{p_0},\gamma \tilde{p_0}) \in [hi-r,hi+r]\}\\
 &=& \#( \Gamma \cap D_{hi+r}) -\#( \Gamma \cap D_{hi-r}) 
\\ &\leq&  m_G(D_{hi+r})+  c_{{1}} m_G(D_{hi+r})^{q}-m_G( D_{hi-r})+ c_{{1}} m_G(D_{hi-r})^{q}
\\ &\leq&  m_G(D_{hi+r})-m_G( D_{hi-r})+  c_{{1}}\big(m_G(D_{hi+r})^{q}+m_G(D_{hi-r})^{q}\big)
\\ &\ll &  r  m_G( D_{hi-r}) + \big(m_G(D_{hi+r})^{q}+m_G(D_{hi-r})^{q}\big)
\\ &\ll &  r e^{(n-1) (hi-r)} +e^{(n-1) {q}(hi+r)}+e^{(n-1) {q}(hi-r)}
\\ & \le &  r e^{(n-1) hi)} \left(1 + \frac{e^{-(n-1) {(1-q)}hi}}r\left(e^{(n-1)  {q} r}+e^{-(n-1)  {q} r}\right)\right) %e^{(n-1) {q}(hi+r)}+e^{(n-1) {q}(hi-r)}
.
%\\ &\leq & e^{(n-1) hi}(e^{(n-1) r}-e^{-(n-1)r}) +t_0e^{(n-1) {q} hi }(e^{(n-1) {q} r}+e^{-(n-1) {q} r})
\end{eqnarray*}
\ignore{
 \begin{eqnarray*}
&& \#\{\gamma \in \Gamma| d(\tilde{p_0},\gamma \tilde{p_0}) \in [hi-\max\{r,r'\},hi+\max\{r,r'\}]\}\\
 &=& \#( \Gamma \cap D_{hi+\max\{r,r'\}}) -\#( \Gamma \cap D_{hi-\max\{r,r'\}}) 
\\ &\leq& e^{(n-1) (hi+r)}+t_0e^{(n-1) {q}(hi+r)}-e^{(n-1)(hi-r)}+t_0e^{(n-1) {q}(hi-r)}
\\ &\leq&  e^{(n-1) hi}(e^{(n-1) r}-e^{-(n-1) r}) +t_0e^{(n-1) {q}(hi+r)}+t_0e^{(n-1) {q}(hi-r)}
\end{eqnarray*}
}
%Furthermore, we have derived the following facts
%\begin{itemize}
%\item By Taylor approximation, we know that $f(x)=e^{(n-1) x}-e^{-(n-1) x}=2 \sum\limits_{k=0}^{\infty}\frac{(n-1)^{2k+1}}{(2k+1)!} x^{2k+1}$. Therefore, for $r_1,r_2<1$, we have $e^{(n-1)r}-e^{-(n-1)r} \leq \tilde{c}_{n} r$ 
%\item 
%Also, by Taylor approximation,  
%we know that $$e^{(n-1) x}+e^{-(n-1) x}=2\left(1+\sum_{k=1}^{\infty}\frac{(n-1)^{2k}}{(2k)!} x^{2k}\right).$$ Therefore, 
Since $q < 1$ and $r<1$, we have $$e^{(n-1)  {q} r}+e^{-(n-1)  {q} r} < 2e^{n-1},$$ 
and clearly $\frac1r{e^{-(n-1) {(1-q)}hi}}\le  1$ %as well 
%Since $q<1$, we have $e^{(n-1) {q} hi  } \leq  e^{(n-1) hi } r$ 
% if one assumes that 
whenever $hi \geq - \frac{\ln r}{(1-q)(n-1)}\,$. %Indeed, we have $e^{( {q} -1)(n-1) hi} \leq r$ if $hi \geq - \frac{\ln (r)}{(q -1)(n-1)}$.
%\end{itemize}
%\attention{could you check this place?}
%Thus, if $hi -r \geq t_0$ and $hi \geq -\ln  r $, up to a constant which depends on $n$ and $q$, we will obtain that
%$$
%e^{(n-1) hi}(e^{(n-1) r}-e^{-(n-1) r}) \ll %\tilde{c}_{n}
% e^{(n-1) hi } r ,$$
%and that
%\begin{eqnarray*}
%e^{(n-1) hi z}(e^{(n-1) z r}+e^{-(n-1) z r}) &\ll& %c'_{n}
%e^{(n-1) hi }  r %\hat{c}_{n}  
%(1+ {q} r) \\
%&\ll& %c_{n}
%e^{(n-1) hi }  r.
%\end{eqnarray*}
Summarizing the above, if $$hi \geq \max\left(-\frac{1}{(1-q)(n-1)} \ln r, r+ t_0\right) ,$$ %where $c_{{4}}$ depends on $n$ and $q$,
%$hi \geq \max(-\ln( r), r+ C_{a,c',\varepsilon_1,n, \Gamma}) $
then $ \# \hat{\Gamma}_i(r)   \ll r e^{(n-1) hi}$.
%, where $c_5$ only depends on $n$. 
%\attention{check the conditions, changed to $c_{{4}},c_4,c_5$, also $T'$ should appear}\myquestion{$C_7$depends on $T'$?}
\end{proof}

\subsection{ The space of discrete geodesics on $\H^n$}
\label{space geodesic}
In this section  we will state  measure estimates for  spaces of geodesics on $\H^n$.

\begin{Def}\label{spacegeodesic}
We will write $\mathscr{G}$ as the space of oriented, unpointed continuous geodesics on $\H^n$. 
Using the fact that $T^1 \H^n$ can be written as $\mathscr{G} \times \r$, we can %And the 
define a measure $\nu$ on $\mathscr{G} $ by $\tilde{\mu}=\nu \times dt$, where $\tilde{\mu}$ is the Liouville measure on $T^1\H^n$. %\attention{THIS SHOULD BE DEFINED IN THE PREVIOUS SECTION, ALONG WITH THE MEASURE $\mu$ which is Liouille on $T^1V$}.
\end{Def}

%Therefore, f
Then we will describe a similar definition for discrete geodesic flows. Namely:
\begin{Def}\label{spacedisgeodesic}
For fixed $h > 0$, $\mathscr{G}_h$ is the space of all $h$-step discrete geodesic trajectories: $\{\varphi^{ht}: t \in \z\}$. That is $ \mathscr{G}_h= \mathscr{G} \times S_h$ where $S_h$ is $[0,h]$ with $0$ and $h$ identified. In addition, since we can write $T^1\H^n = \mathscr{G}_h \times \mathbb{Z}h$, then we can define the measure $m$ on $\mathscr{G}_h$ by $m= \nu \otimes \lambda$, where $\nu$ is the measure on $\mathscr{G}$ defined above and $\lambda$ the Lebesgue measure on $S_h$.  Furthermore, the measure $\tilde{\mu}$ on the unit tangent bundle $T^1\H^n$ becomes the product of the measure $m$ on %the discrete geodesic space 
$\mathscr{G}_h$ with the 
counting measure on $ \mathbb{Z}h$.% And the discrete geodesic space flow $\varphi^{ht} \in\mathscr{G} \times S_h$ acts on $T^1\tilde{V}$ as $\phi^{ht}(g,sh) = (g,(s + t)h)$.
\end{Def}

%Following the same argument in \cite{FM}, the following hyperbolic geometry theorem \ref{lemma4old} is one of the important ingredients of the proof of Theorem \ref{lemma 6}.
In \cite{FM}, Maucourant considered the space of continuous geodesics,
% and then, for two fixed balls of radius $r_1$, $r_2$ and distance $d=d(o_1,o_2)$ from %each other's 
%their centers, 
and estimated
%computed 
the probability that a random geodesic visits two fixed balls in $V$ %to be bounded by a constant times $r_1^{n-1}r_2^{n-1}%\exp\big(
%e^{-(n-1) d}
%\big)
%$.  That is,
as follows:

\begin{Thm} \label{lemma4old} {\rm \cite[Lemma 4]{FM}}
There exists a constant $c_{{6}}>0$ such that, for any two balls in $\H^n$ of respective centers and radii $(o_1,r_1),(o_2,r_2)$ that satisfy $r_1$, $r_2<1$, and $d(o_1,o_2)>2$, the $\nu$-measure of continuous geodesics %falling in 
meeting those two balls is less than
\[
c_{{6}} r_1^{n-1}r_2^{n-1}e^{-(n-1) d(o_1,o_2)}.
\]
\end{Thm}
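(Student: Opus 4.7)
\textbf{Plan for proof of Theorem \ref{lemma4old}.} The plan is to exploit the product structure $\tilde\mu = \nu \times dt$ on $T^1\H^n$ to convert the $\nu$-measure of geodesics hitting both balls into a Liouville-measure computation on the unit tangent bundle, and then to estimate that Liouville measure using the hyperbolic formula for the solid angle subtended by a distant ball.

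First I would set up the Fubini-type identity: writing $U_j \subset T^1\H^n$ for the set of unit tangent vectors whose base-point lies in $B_j = B(o_j,r_j)$, and letting $L(\gamma, B_1)$ denote the length of the chord $\gamma \cap B_1$, the relation $\tilde\mu = \nu \times dt$ gives
$$
\tilde\mu\bigl\{(x,v) \in U_1 : \gamma_{(x,v)} \cap B_2 \neq \emptyset\bigr\} \;=\; \int_{\mathscr{G}} L(\gamma, B_1)\, \mathbf{1}\{\gamma \cap B_2 \neq \emptyset\}\, d\nu(\gamma).
$$
To turn this into a bound on $\nu\{\gamma : \gamma \cap B_1 \neq \emptyset,\ \gamma \cap B_2 \neq \emptyset\}$, I would work instead with the doubled ball $B_1' = B(o_1, 2r_1)$: the hyperbolic Pythagorean identity $\cosh(2r_1) = \cosh(d_0)\cosh(L/2)$, with $d_0 \le r_1$ the minimum distance from $o_1$ to $\gamma$, shows that every geodesic meeting $B_1$ satisfies $L(\gamma, B_1') \ge c r_1$ for some absolute constant $c>0$, uniformly in $r_1 \in (0,1)$.

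Next I would estimate the Liouville measure on the left-hand side (with $B_1$ replaced by $B_1'$). For each fixed $x \in B_1'$, the fiber over $x$ consists of directions $v \in S^{n-1}$ whose forward or backward geodesic meets $B_2$; by the hyperbolic law of sines applied to the right triangle determined by a tangent direction, this fiber lies in two spherical caps of angular radius $\theta$ with $\sin\theta = \sinh r_2 / \sinh d(x,o_2)$. Since $d(o_1,o_2) > 2$ and $r_1 < 1$, the triangle inequality gives $d(x,o_2) \ge d(o_1,o_2) - 2r_1 > 0$ and comparable to $d(o_1,o_2)$, so $\theta \ll r_2\, e^{-d(o_1,o_2)}$ and the solid angle of the fiber is $\ll r_2^{n-1} e^{-(n-1)d(o_1,o_2)}$. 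Integrating over $x \in B_1'$ and using $\mathrm{vol}(B_1') \ll r_1^n$, I obtain an upper bound of order $r_1^n r_2^{n-1} e^{-(n-1)d(o_1,o_2)}$ for the Liouville measure, and dividing by the chord-length lower bound $c r_1$ yields the claimed estimate on $\nu$.

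The main obstacle is making the visual-angle estimate uniform in the borderline regime where $r_1, r_2$ approach $1$ and $d(o_1,o_2)$ is only slightly larger than $2$: one must verify that $d(x,o_2)$ stays bounded away from zero for every $x \in B_1'$ and that the asymptotic $\sinh r / \sinh d \asymp r\, e^{-d}$ carries explicit constants independent of this edge case. The chord-length lower bound for the enlarged ball is comparatively routine once one trusts the hyperbolic Pythagoras identity.
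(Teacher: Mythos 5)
First, a point of reference: the paper does not prove this statement --- it is quoted verbatim from Maucourant's paper as \cite[Lemma 4]{FM} and used as a black box --- so there is no in-paper proof to measure yours against. Your overall strategy (Fubini over the product structure $\tilde\mu=\nu\times dt$, a chord-length lower bound on an enlarged ball to convert Liouville measure into $\nu$-measure, and a visual-angle estimate for the fiber over each base point) is a natural and standard way to prove such an estimate, and two of the three steps are fine: the Fubini identity is correct, and the bound $L(\gamma,B(o_1,2r_1))\ge c\,r_1$ via $\cosh(2r_1)=\cosh(d_0)\cosh(L/2)$ with $d_0\le r_1$ is correct and uniform for $r_1\in(0,1)$.

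The gap is in the visual-angle step, and it is not the routine verification you describe at the end: the assertion that $d(x,o_2)\ge d(o_1,o_2)-2r_1$ is ``comparable to $d(o_1,o_2)$'' for every $x\in B(o_1,2r_1)$ is false. The hypotheses only give $d(x,o_2)>d(o_1,o_2)-2>0$, and this lower bound can be arbitrarily small while $e^{d(o_1,o_2)}\ge e^{2}$; consequently $\sin\theta=\sinh r_2/\sinh d(x,o_2)$ admits no bound of the form $\ll r_2e^{-d(o_1,o_2)}$ with an absolute constant, and the fiber over $x$ can even be the whole sphere (this happens when $x\in B_2$, which is compatible with $d(o_1,o_2)>2$ once $r_1,r_2$ are close to $1$). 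So the pointwise estimate you integrate genuinely fails in the regime $d(o_1,o_2)-2r_1\to 0$, rather than merely having a delicate constant. The repair is to integrate the true fiber measure $\min\bigl(1,(\sinh r_2/\sinh d(x,o_2))^{n-1}\bigr)$ over $B(o_1,2r_1)$ after splitting into dyadic shells $d(x,o_2)\asymp 2^kr_2$: the shells with $d(x,o_2)\le 1$ contribute $\ll(2^kr_2)^n\cdot 2^{-k(n-1)}=2^kr_2^n$ each, hence $\ll r_2^{n-1}$ in total, and they are nonempty only when $d(o_1,o_2)<2r_1+1<3$, which forces $r_1>1/2$; in that regime $r_1^{n}$ and $e^{-(n-1)d(o_1,o_2)}$ are both bounded below by absolute constants, so the contribution $r_2^{n-1}$ is still $\ll r_1^{n}r_2^{n-1}e^{-(n-1)d(o_1,o_2)}$. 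The shells with $d(x,o_2)\ge 1$ satisfy $\sinh d(x,o_2)\gg e^{d(o_1,o_2)}$ and give the bound you intended. With that case analysis inserted, your argument closes.
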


%In particular, when $n=2$, i.e.\ for $\H^2$, under the same assumption, the $\nu$-measure of geodesic flows meeting
%falling in 
%those 
%two 
%balls is less than $c_{{6}} r _1r_2\exp\big(-d(o_1, o_2)\big)$.

%We are also interested in such geometry theorem
Here is a similar estimate for discrete geodesics on $T^1\H^n$:
%, which can be easily derived from the Fubini theorem and the above theorem.

\begin{Thm} \label{dis lemma4}
Consider two balls in $\H^n$ with respective centers and radii $(o_1,r_1)$, $(o_2,r_2)$ that satisfy $r_1<1$, $r_2<1$, and $d =d(o_1,o_2)>2$.  Also assume that $h>2 \min(r_1,r_2)$. Then %there exists a constant $c_{{3}}>0$, 
%\attention{THE PROOF    seems to show that $c_{{6}} = c_{{3}}$. Is this correct? if yes, you should use the same constant} 
%such that 
the $m$-measure of the $h$-step geodesics which %fall in 
intersect those two balls is less than
%\textbf{not clear for two balls?}
$$
\begin{cases}
       2c_{{6}} r_1^{n-1}r_2^{n-1}e^{-(n-1) d}\min(r_1,r_2) & \text{if } \dist(d , h \z ) \le 2 \max(r_1,r_2) \\
       0 & \text{otherwise,}%\text{if } | dist - k h | \geq \max\{r, r'\}  \text{ for any integer } k 
     \end{cases}
$$
where $c_{{6}}$ is as in Theorem  \ref{lemma4old}.
\end{Thm}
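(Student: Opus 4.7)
The plan is to combine Maucourant's continuous estimate (Theorem~\ref{lemma4old}) with a one-dimensional analysis of the phase shift $s \in S_h$, using the product decomposition $\mathscr{G}_h = \mathscr{G} \times S_h$ together with $m = \nu \otimes \lambda$. First, observe that an $h$-step trajectory $\{\gamma(s+kh)\}_{k \in \z}$ intersects $B_1$ and $B_2$ only if the underlying continuous geodesic $\gamma$ meets both balls, so the $\nu$-measure of the relevant $\gamma$'s is already controlled by $c_{6} r_1^{n-1} r_2^{n-1} e^{-(n-1) d}$. Writing $m$ via Fubini as $\int_{\mathscr{G}} \lambda(U(\gamma))\,d\nu(\gamma)$, where $U(\gamma) \subset S_h$ denotes the set of admissible phase shifts, it suffices to show that $\lambda(U(\gamma)) \le 2\min(r_1,r_2)$ whenever the support condition on $\dist(d, h\z)$ holds, and that $U(\gamma) = \emptyset$ otherwise.

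To analyze $U(\gamma)$, I would parametrize $\gamma$ by arclength, let $[a_i, b_i]$ be the closed interval of times $t$ with $\gamma(t) \in B_i$ (an interval by convexity of balls, of length $b_i - a_i \le 2 r_i$), and set $t_i = (a_i + b_i)/2$. A triangle-inequality bound via the feet of the perpendiculars from $o_i$ to $\gamma$ yields $\bigl| |t_2 - t_1| - d \bigr| \le r_1 + r_2$. By definition,
\[
U(\gamma) = \{\, s \in S_h : \exists\, k_1, k_2 \in \z \text{ with } s + k_i h \in [a_i, b_i]\ \text{for}\ i=1,2 \,\}.
\]
Assume without loss of generality $r_1 \le r_2$. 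Since $h > 2\min(r_1, r_2) \ge b_1 - a_1$, the reduction map $[a_1, b_1] \to S_h$ is injective; hence $U(\gamma)$ is contained in its image and has $\lambda$-measure at most $b_1 - a_1 \le 2\min(r_1, r_2)$.

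Next, for $U(\gamma)$ to be nonempty one needs $(k_2 - k_1)h \in [a_2 - b_1, b_2 - a_1]$ for some integer $k_2 - k_1$; this interval is centered at $t_2 - t_1$ with half-width $r_1 + r_2$, so $\dist(|t_2 - t_1|, h\z) \le r_1 + r_2$, and combining with $\bigl||t_2 - t_1| - d\bigr| \le r_1 + r_2$ forces a support condition of the form $\dist(d, h\z) \le 2(r_1 + r_2)$. When this fails, $U(\gamma) = \emptyset$ for every $\gamma$ and $m$ vanishes; when it holds, pairing $\lambda(U(\gamma)) \le 2\min(r_1,r_2)$ with Theorem~\ref{lemma4old} via Fubini gives the claimed bound. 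I expect the main obstacle to be the bookkeeping around the mod-$h$ reduction and the precise numerical constants in the support condition (tightening $2(r_1+r_2)$ to $2\max(r_1,r_2)$ as in the statement requires using the actual chord half-widths $r_i' \le r_i$ in place of $r_i$ and distinguishing the two balls by which one is longer). The underlying picture—a chord of length at most $2r_i$ cut out of each ball, plus a diophantine constraint relating $d$ and $h$—is transparent, and the reduction to Maucourant's continuous estimate is automatic once $U(\gamma)$ is controlled.
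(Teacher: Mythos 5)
Your decomposition is exactly the paper's: Fubini over $\mathscr{G}_h=\mathscr{G}\times S_h$ with $m=\nu\otimes\lambda$, Maucourant's Theorem~\ref{lemma4old} for the $\nu$-factor, and a bound of $2\min(r_1,r_2)$ on the $\lambda$-measure of admissible phases (the paper's own proof is only a two-line sketch of this same argument, with the support condition taken as display \eqref{eq4}). The one place your write-up falls short of the statement is the support condition: routing the estimate through the chord midpoints $t_i$ costs you two triangle inequalities and lands you at $\dist(d,h\z)\le 2(r_1+r_2)$, which does not imply the stated threshold $2\max(r_1,r_2)$. The fix is to compare the chord endpoints to $d$ directly rather than via the $t_i$: if $[a_i,b_i]$ is the parameter interval of $\gamma$ in $B_i$ (say $b_1<a_2$), then since $\gamma$ is unit-speed, $a_2-b_1=d(\gamma(b_1),\gamma(a_2))\ge d-r_1-r_2$ and $b_2-a_1=d(\gamma(a_1),\gamma(b_2))\le d+r_1+r_2$, so any admissible $(k_2-k_1)h$ lies in $[d-r_1-r_2,\,d+r_1+r_2]$ and hence $\dist(d,h\z)\le r_1+r_2\le 2\max(r_1,r_2)$. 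With that substitution your argument proves the theorem as stated; the rest (injectivity of the mod-$h$ reduction of the shorter chord, giving $\lambda(U(\gamma))\le 2\min(r_1,r_2)$ from $h>2\min(r_1,r_2)$) is fine.
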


\begin{proof} %[Proof of Theorem \ref{dis lemma4}]
An $h$-step geodesic will fail to %fall in 
intersect both %those 
balls if for any $k$ we have
\begin{equation}\label{eq4}
| d%(o,o') 
- k h | > 2\max(r_1,r_2);%
\end{equation}
in this case the measure we are to estimate is zero. So only if there is an integer $k$  such that \eqref{eq4} fails, 
%$| d(o,o') - k h | \leq \max(r,r')$, 
can the $h$-step geodesic meet those balls. Using Theorem \ref{lemma4old} and the fact that the space of discrete geodesics is $\mathscr{G} \times S_h$ with measure $m= \nu \otimes dh$, one can notice that %if the geodesic falls in those two balls, then 
the measure of such geodesics is bounded by $2c_{{6}}\min(r_1,r_2)  r_1^{n-1}r_2^{n-1}e^{-(n-1) d}$. 
% In \cite{FM}, Maucourant he considered the space of continuous geodesics, and then, for fixed two balls of radii $r$, $r'$ and distance $d=d(o,o')$ from each other's center, computed the probability that a random geodesic visits both of them is bounded by $c_{{2}}r^{n-1}r'^{n-1}exp(-\delta d(o,o'))$.
%\textbf{need to explain discrete and continuous geodesic spaces}
%In sum, we know that the $m$-measure of geodesics that fall into these two balls is bounded by $c_{{3}} r^{n-1}r'^{n-1}\exp(-(n-1) d(o,o'))\times \min(r,r')$, if $|d(o,o') - k h |<  \max(r,r')$; otherwise the measure of geodesics is zero. 
\end{proof}

%\begin{proof} [Proof of Theorem \ref{dis lemma4}]
%Suppose a discrete $h$-step geodesic intersects these two balls. Then by the triangle inequality, we know that for some $k\in \z$, the difference between $kh$ and $d$ cannot be greater than $2 \max(r_1,r_2)$. 
%Therefore, an $h$-step geodesic will fail to intersect both balls if for every $k$ we have
%\begin{equation}\label{eq4}
%| d- k h | > 2 \max(r_1,r_2);
%\end{equation}
%in this case the measure we are to estimate is zero. So only if there is an integer $k$  such that \eqref{eq4} fails, 
%can the $h$-step geodesic fall in those balls. 
%Meanwhile, Theorem \ref{lemma4old} tells us that the $\nu$-measure of continuous geodesic visiting both of them is less than $c_{{6}}r_1^{n-1}r_2^{n-1}\exp(-(n-1) d)$. Then for each fixed continuous geodesic, the $\lambda$-measure of the discrete geodesics lying on  it and intersecting these two balls is less than $2\min(r_1,r_2)$. By the fact that $m=\nu \otimes \lambda$, Fubini's theorem tells us that the measure of the set of discrete geodesics intersecting these two balls is less than the product of the $\nu$-measure of continuous geodesic visiting both of them and $2\min(r_1,r_2)$.
%Therefore, we proved that the $m$-measure of discrete geodesics intersecting these two balls is less than $2\min(r_1,r_2)$ times $c_{{6}} r_1^{n-1}r_2^{n-1}\exp\big(-(n-1) d\big)$.
%\end{proof}
\subsection{A bound for the $L^2$-norm of $S_T[\mathcal{F}]$}

%the measure decomposition on unit tangent bundle.

Recall that for  $t \in \n$  we defined $f_t$ to be the characteristic function of $B_t$, which is a ball centered at $p_0\in V = \Gamma\backslash \H^n$  of radius $r_t$, see \equ{defft}, and considered 
%In addition, we define $g_t$ to be the characteristic function of $\tilde{B}_t$. Thus, the lift $\tilde{f}_t$ of $f_t$ to $T^1\tilde{V}$ satisfies
%\[
%\tilde{f}_t=\sum_{\gamma \in \Gamma} g_t \circ \gamma .
%\]
%}
the family of functions $\mathcal{F}=(f_t)_{t \in \n} $ on $T^1V$. Also we have chosen a lift $\tilde p_0\in\H^n$ of $p_0$. Now  define $\tilde{B}_t$ to be  a ball in $ \H^n$  centered at $\tilde p_0$ of radius $r_t$, and let $g_t$ be the characteristic function of $\tilde{B}_t$, 
Thus, the lift $\tilde{f}_t$ of $f_t$ to $T^1\tilde{V}$ satisfies
\[
\tilde{f}_t=\sum_{\gamma \in \Gamma} g_t \circ \gamma .
\]

Fix a fundamental domain $D$ of $ \H^n$ for $\Gamma$ containing $\tilde{p_0}$. % (see \S\ref{funddomain} for the definition), % \attention{I NOTICED THAT YOU DEFINE THE FUNDAMENTAL DOMAIN ONLY IN THE NEXT CHAPTER, THIS IS NOT GOOD. Maybe it is better to move section \ref{funddomain} to the background chapter.} 
 and define $$i_V(\tilde{p_0})\df \sup\{r\in \r:  B(\tilde{p_0}, r)\subset D\}.$$ Also  define
 \be R\df\min\big(i_V(\tilde{p_0})/4,1,h\big),\label{defr}\ee
 and, for $i\in\z_+$,
 $$\Gamma_i \df \left\{\gamma \in \Gamma : d(\tilde p_0,\gamma \tilde p_0) \in\left[hi-\frac{h}{2},hi+\frac{h}{2}\right)\right\}.$$
%We are going to prove Theorem \ref{dis lemma6}, which is fundamental to solving the shrinking target problem for discrete geodesic flows.
%he bound of $S_t[\mathcal{F}]/I_T$

\ignore{Note that for the divergence case of  Theorem \ref{discrete2} without loss of generality we can assume that $r_t  \to 0$ when  $t \to\infty$: indeed, if $(r_t)$ is bounded from below by a positive constant, then 
 %we are dealing with the divergence case of Theorem \ref{discrete1}, and 
 the ergodicity of the geodesic flow implies the condition (\ref{infoften}). Furthermore, we can  assume that $r_t \le R$  for  all $t \in \z_{\geq 0}$. Indeed, if the theorem is proved under that assumption, then applying it to the family $\{B_t : t \ge t_0\}$ where $t_0$ is such that $r_t\le R$ when $t\ge t_0$, we still recover the condition (\ref{infoften}).}

%We are going to prove Theorem \ref{dis lemma6}, which is fundamental to solving the shrinking target problem for discrete geodesic flows.
%he bound of $S_t[\mathcal{F}]/I_T$
%In order to prove Theorem \ref{dis lemma6}, we need the following theorem.
\begin{Thm} \label{lemma 7}
Let $D\subset \H^n$ be a %strict 
fundamental domain   for $\Gamma$  such that $D$ contains the ball of center $\tilde{p_0}$ and of radius $3R$.
%, where $R$ is defined in  \ref{defr}.
%and $T^1D =\tilde{\pi}^{-1}(D)$. 
Then 
%\textbf{what is this measure same as $\mu$? and what is D} 
%\textbf{}
for all $T \in\N$,
\begin{eqnarray*}
\int\limits_{T^1V}S_T[\mathcal{F}](v)^2\,d\mu(v)% &=&\int\limits_{T^1V} \sum_{t=1}^T f_t (\phi^{ht}v)d\mu(v) \\
&\leq& 2\sum_{s=1}^T  \sum_{i=1}^{[s+\frac{6R}{h}]} \sum_{\gamma \in \Gamma_i} \int_{T^1D}\sum_{t=1}^s   g_s(v)g_t(\gamma\phi^{h(s-t)}v)\,d\tilde{\mu}(v).
\end{eqnarray*}
%where and
\end{Thm}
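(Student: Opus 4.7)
The plan is to bound $\int_{T^1V} S_T^2 \, d\mu$ by (i) squaring and symmetrizing, (ii) invoking invariance of $\mu$ under the geodesic flow together with time-reversal, (iii) unfolding the integral to a fundamental domain on $T^1\H^n$, and (iv) truncating the resulting $\Gamma$-sum by a triangle inequality.

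First I would expand $S_T[\mathcal{F}](v)^2 = \sum_{s,t=1}^T f_s(\phi^{hs}v)f_t(\phi^{ht}v)$ and, by symmetry in $(s,t)$ together with absorbing the diagonal into a factor of $2$, bound this above by $2\sum_{s=1}^T\sum_{t=1}^s f_s(\phi^{hs}v)f_t(\phi^{ht}v)$. Then for each pair $t\le s$, the $\phi^{-hs}$-invariance of $\mu$ gives
$$\int_{T^1V} f_s(\phi^{hs}v)f_t(\phi^{ht}v)\,d\mu = \int_{T^1V} f_s(v)f_t(\phi^{-h(s-t)}v)\,d\mu,$$
and the time-reversal identity $\phi^{-\tau}=\iota\phi^\tau\iota$ on $T^1\H^n$, combined with the $\iota$-invariance of $\mu$ and the fact that $f_s,f_t$ depend only on the footpoint (so $f_s\circ\iota=f_s$, etc.), lets me substitute $v\mapsto\iota v$ to replace $\phi^{-h(s-t)}$ by $\phi^{h(s-t)}$, obtaining $\int_{T^1V} f_s(v)f_t(\phi^{h(s-t)}v)\,d\mu(v)$.

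I would next unfold this integral to the fundamental domain $D$. Since each $\tilde B_r$ has radius $\le R$ and $D\supset B(\tilde p_0,3R)$, on $T^1D$ the $\Gamma$-sum defining the lift of $f_s$ collapses to $g_s$ (only $\gamma=e$ contributes), while the argument $\phi^{h(s-t)}v$ can leave $T^1D$, so the lift of $f_t$ must be kept as the full $\Gamma$-sum, giving
$$\int_{T^1V} f_s(v)f_t(\phi^{h(s-t)}v)\,d\mu(v) = \int_{T^1D} g_s(v) \sum_{\gamma\in\Gamma} g_t(\gamma\phi^{h(s-t)}v)\,d\tilde\mu(v).$$

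Finally I would truncate the $\Gamma$-sum using the partition $\Gamma=\bigsqcup_i\Gamma_i$. If $g_s(v)g_t(\gamma\phi^{h(s-t)}v)\ne 0$, then $\pi(v)\in B(\tilde p_0,r_s)$ and $\gamma\pi(\phi^{h(s-t)}v)\in B(\tilde p_0,r_t)$, while $d(\pi(v),\pi(\phi^{h(s-t)}v))=h(s-t)$, so the triangle inequality forces
$$d(\tilde p_0,\gamma\tilde p_0)\le r_s+h(s-t)+r_t\le h(s-1)+2R.$$
Combined with $hi-h/2\le d(\tilde p_0,\gamma\tilde p_0)$ for $\gamma\in\Gamma_i$, this gives $i\le s-\frac{1}{2}+\frac{2R}{h}$, which is dominated by $[s+\frac{6R}{h}]$ since $R/h\le 1$. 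Restricting the $\Gamma$-sum to $\bigcup_{i=1}^{[s+6R/h]}\Gamma_i$, interchanging the orders of summation and integration, and combining with the factor $2$ from the first step yields the claim. I expect the main obstacle to be the time-reversal step — it is the least routine of the manipulations — together with the need to make the upper bound on $i$ uniform in $t\in[1,s]$, which forces the somewhat loose constant $\frac{6R}{h}$.
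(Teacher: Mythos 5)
Your proposal is correct and follows essentially the same route as the paper's own proof: square and symmetrize with a factor of $2$, use flow-invariance and time-reversal of $\tilde\mu$ (the paper performs the reversal as the substitution $v=-w$ on $T^1D$ after unfolding, you do it on $T^1V$ before — an inessential reordering), unfold to the fundamental domain so that only $\gamma=\mathrm{id}$ survives in the $\tilde f_s$-sum, and truncate the remaining $\Gamma$-sum by the triangle inequality $|h(s-t)-d(\tilde p_0,\gamma^{-1}\tilde p_0)|\le 2r_s+2r_t<4R$. The only caveat, inherited from the statement rather than introduced by you, is that the inner sum must start at $i=0$ (so as not to discard $\Gamma_0\ni e$, which carries the near-diagonal terms), as the paper's own proof and the subsequent display indeed do.
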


\begin{proof}

For  fixed $T\in\N$ and $v  \in T^1 V$,  we know that
\begin{eqnarray*}
S_T[\mathcal{F}](v)^2 = \left(\sum_{s=1}^{T}f_t(\phi^{ht} v)\right)\left( \sum_{t=1}^{T}f_s(\phi^{hs} v )\right) = 2 \sum_{s=1}^{T} \sum_{t \le s}f_t(\phi^{ht} v)f_s(\phi^{hs} v ).
\end{eqnarray*}

%&=& \sum_{s=1}^{T} \sum_{t=1}^{T}f_t(\phi^{ht} v)f_s(\phi^{hs} v )\\
%&=& \sum_{s=1}^{T} \sum_{t \le s}f_t(\phi^{ht} v)f_s(\phi^{hs} v ) + \sum_{t=1}^{T} \sum_{s > t}f_t(\phi^{ht} v)f_s(\phi^{hs} v )\\
Now we can integrate $S_T[\mathcal{F}](v)^2$ over $T^1V$ and make a change of variable $w=\phi^{hs} v$.  Since $\phi^{hs}$ preserves the measure,   we have the following:

\[
\int\limits_{T^1V}S_T[\mathcal{F}](v)^2\,d\mu(v) \le 2\sum_{s=1}^{T} \sum_{t \le s}\int \limits_{T^1V}f_s (w)f_t (\phi^{h(t-s)} w )\,d\mu(w).
\]

By the fact that $\tilde{f}_t$ is the lift of $f_t$, we obtain that

\[
\int\limits_{T^1 V}S_T[\mathcal{F}](v)^2\,d\mu(v) \le 2\sum_{s=1}^{T} \sum_{t \le s}\int \limits_{T^1 \H^n}{\tilde{f}}_s (w){\tilde{f}}_t (\phi^{h(t-s)} w )\,d \tilde{\mu}(w).
\]

Since $\tilde{f}_t=\sum_{\gamma \in \Gamma}g_t\circ \gamma$, we can write
\[
\int\limits_{T^1 V}S_T[\mathcal{F}](v)^2\,d\mu(v) \le 2\sum_{s=1}^{T} \sum_{t \le s}\int \limits_{T^1 \H^n}\left( \sum_{\gamma \in \Gamma} g_s( \gamma w) \right) \left(\sum_{\gamma \in \Gamma} g_t( \gamma \phi^{h(t-s)} w )\right)\,d \tilde{\mu}(w).
\]

%Let D be a strict fundamental domain of $ \H^n$ for $\Gamma$, such that D contains the ball of center $\tilde{p_0}$, radius 3R , and $T^1D =\tilde{\pi}^{-1}(D)$

Recall that $D$ is the fundamental domain of $ \H^n$ for $\Gamma$. This insures that for all $w$ in $T^1 D $, % and $s \ge 0$, we have $\tilde{f}_s(w)=g_s(w)$.
% because $g_s$ has support contained in D, and so 
%Furthermore,
 in the sum $\sum\limits_{\gamma\in \Gamma } g_s(\gamma w)$, all terms but the one corresponding to $\gamma=id$ are zero. So we have

\[\int\limits_{T^1V}S_T[\mathcal{F}](v)^2\,d\mu(v)
\le 2\sum_{s=1}^{T} \sum_{\gamma \in \Gamma}  \sum_{t \le s} \int \limits_{T^1D} g_s (w)g_t (\gamma\phi^{h(t-s)} w )\,d\tilde{\mu}(w).
\]

%\textbf{explain the symmetry of the functions ?} 
Making another change of variables  $v=-w$, where $
-w$
means the point in
$T^1D$
with the same projection as $w$ and the
tangent vector pointing in the opposite direction,
%since the definition of $g_s$ does not depend on the direction of the vector, we have that %and using the symmetry of the functions $g_t$, 
we deduce that
\[
\int\limits_{T^1V}S_T[\mathcal{F}](v)^2\,d\mu(v)
\le 2\sum_{s=1}^{T}\int  \limits_{T^1D} \sum_{\gamma \in \Gamma}  \sum_{t=1}^{s}  g_s (v)g_t (\gamma\phi^{h(s-t)} v )\,d\tilde{\mu}(v).
\]

For fixed $v\in T^1 D$, we know that $g_s(v)g_t(\gamma\phi^{h(s-t)}v)$ is zero when $\pi(v)\notin \tilde{B}_s $ or $\pi(\phi^{h(s-t)}v) \notin \gamma^{-1}\tilde{B}_t$, which implies that $g_s(v)g_t(\gamma\phi^{h(s-t)}v)$ vanishes when $$ |h(s-t)-d(\tilde{p_0},\gamma^{-1}\tilde{p_0})| > 2r_t+2r_s.$$ %Therefore, $g_s(v)g_t(\gamma\phi^{h(s-t)}v)$ vanishes when $|h(s-t)-d(\tilde{p_0},\gamma^{-1}\tilde{p_0})| >4R$. 
Since we know that $ 2r_t+2r_s <4R$, we can conclude that $g_s(v)g_t(\gamma\phi^{h(s-t)}v)$ vanishes when $t$ is outside of the interval $$\left[s-\frac{d(\tilde{p_0},\gamma^{-1}\tilde{p_0})}{h}-\frac{4R}{h}, s-\frac{d(\tilde{p_0},\gamma^{-1}\tilde{p_0})}{h}+\frac{4R}{h}\right].$$  Therefore, for any $v\in T^1V$ and any $s\in \n$, \eq{fact1}{ \# \big\{1 \leq t \leq s:  g_s(v)g_t(\gamma\phi^{h(s-t)}v)=1\big\} \leq \frac{8R}{h}.}

 Furthermore, the integral is zero if $\left|d(\tilde{p_0},\gamma^{-1}\tilde{p_0})-hi\right| > 2R> r_t+r_s$ for all $i$. Hence  this integral vanishes when $|hi-h(s-t)|>6R$,  i.e.\ when $$hi-h(s-t)>6R \quad \text {or}\quad hi-h(s-t)<-6R.$$ In particular, we see that the quantity $g_s(v)g_t(\gamma\phi^{h(s-t)}v)$ is zero if $$hi>hs+6R>h(s-t)+6R,\quad \text{i.e. } i>s+\frac{6R}{h}.$$
%\end{enumerate}

By the above fact and the fact that the union of all $\Gamma_i$ is $\Gamma$, we have
\begin{eqnarray*}
 \int\limits_{T^1V}S_T[\mathcal{F}](v)^2\,d\mu(v)
&\le& 2 \sum_{s=1}^T\sum\limits_{i\geq0} \sum_{\gamma \in \Gamma_i} \sum_{t=1}^s\int_{T^1D}g_s(v)g_t(\gamma\phi^{h(s-t)} v) \,d\tilde{\mu}(v).
\\ &=& 2 \sum_{s=1}^T  \sum_{i=0}^{[s+\frac{6R}{h}]} \sum_{\gamma \in \Gamma_i} \int_{T^1D}\sum_{t=1}^s g_s(v)g_t(\gamma\phi^{h(s-t)}v)\,d\tilde{\mu}(v).
\end{eqnarray*}
\end{proof}

%\attention{
Now let us  define \[ c_R =\frac{6R+2}{h} % \label{defcr} 
\] and   split the  estimate of Theorem \ref{lemma 7} into two parts:
%where $c_4$ is defined in theorem \ref{dis lemma5}.to get the $L^2$ bound, 
%}
%\attention{put min or max, and figure out what is $t_0$ and where it is defined.}
%as follows:
\eq{twoparts}{
\begin{aligned}
\int\limits_{T^1V}S_T[\mathcal{F}](v)^2\,d\mu(v)
%&\le& 2 \sum_{s=1}^T  \sum_{i=0}^{[s+\frac{6R}{h}]} \sum_{\gamma \in \Gamma_i} \int_{T^1D}\sum_{t=1}^s g_s(v)g_t(\gamma\phi^{h(s-t)}v)d\tilde{\mu}(v)
&\le  2 \sum_{s=1}^T  \sum_{i=1}^{[c_R]} \sum_{\gamma \in \Gamma_i} \int_{T^1D}\sum_{t=1}^s g_s(v)g_t(\gamma\phi^{h(s-t)}v)\,d\tilde{\mu}(v)
\\ &+  2 \sum_{s=1}^T  \sum_{i=[c_R]}^{[s+\frac{6R}{h}]} \sum_{\gamma \in \Gamma_i} \int_{T^1D}\sum_{t=1}^s  g_s(v)g_t(\gamma\phi^{h(s-t)}v)\,d\tilde{\mu}(v).
\end{aligned}
}

%chosen so that for $t \geq t_0$ one has $e^{ \frac{-\delta i}{2n(n+1)+4}} < \min\{c_G,\frac{1}{5\delta}\}$.

\subsection{ A bound on the first part of \equ{twoparts}}

It is not hard to estimate the first part.

\begin{Thm} \label{lemma 8}
There is constant $c_{{7}}$, only depending on $R$ and $h$, such that  for all $T \in\N$
$$\sum_{s=1}^T \sum^{[c_R]}_{i=0}\sum_{\gamma \in \Gamma_i} \int_{T^1D}\sum_{t=1}^s g_s(v)g_t(\phi^{h(s-t)}v)\,d\tilde{\mu}(v) 
 \le  c_{{7}} \sum ^T_{t=1} r_t^n  .
$$
\end{Thm}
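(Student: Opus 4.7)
The proof should be almost mechanical once we exploit two finiteness features: the index $i$ ranges over a bounded set, and for each $\gamma$ only $O(1)$ values of $t$ contribute.

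First, I would observe that the outer index $i$ runs only up to $[c_R]$, a constant depending only on $R$ and $h$. Hence $\bigcup_{i=0}^{[c_R]}\Gamma_i\subset \Gamma\cap D_{hc_R+h/2}$, which is a finite set; call its cardinality $N_R$. So the sum $\sum_{i=0}^{[c_R]}\#\Gamma_i\le N_R$ is bounded uniformly in $T$.

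Second, I would apply the bound \equ{fact1}: for any $v\in T^1\H^n$, any $s\in\N$, and any $\gamma\in\Gamma$, the set of $t\in\{1,\dots,s\}$ for which $g_s(v)g_t(\gamma\phi^{h(s-t)}v)=1$ has at most $8R/h$ elements. Since $g_s,g_t\in\{0,1\}$, and $g_s(v)g_t(\gamma\phi^{h(s-t)}v)\le g_s(v)$ pointwise, this yields
\[
\sum_{t=1}^{s} g_s(v)g_t(\gamma\phi^{h(s-t)}v)\le \frac{8R}{h}\,g_s(v).
\]

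Third, I would estimate $\int_{T^1D} g_s(v)\,d\tilde\mu(v)$. Because $D$ contains the ball of radius $3R$ around $\tilde p_0$ and $r_s\le R$, we have $\tilde B_s\subset D$, so this integral equals the Liouville measure of $T^1\tilde B_s$, which is a constant multiple of the hyperbolic volume of $\tilde B_s$. Since $r_s\le R\le 1$ is bounded, the hyperbolic volume of a ball of radius $r_s$ is comparable to $r_s^n$; thus $\int_{T^1D} g_s(v)\,d\tilde\mu(v)\ll r_s^n$.

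Finally, combining these three observations gives
\[
\sum_{s=1}^T\sum_{i=0}^{[c_R]}\sum_{\gamma\in\Gamma_i}\int_{T^1D}\sum_{t=1}^{s} g_s(v)g_t(\gamma\phi^{h(s-t)}v)\,d\tilde\mu(v)\le N_R\cdot\frac{8R}{h}\sum_{s=1}^{T}\int_{T^1D}g_s(v)\,d\tilde\mu(v)\ll \sum_{s=1}^{T}r_s^n,
\]
which is the claimed bound with $c_7$ absorbing the constants $N_R$, $8R/h$, and the implicit constant from the volume estimate. I do not anticipate a serious obstacle here — the main point of splitting the sum at $i=[c_R]$ in \equ{twoparts} was precisely to isolate the $\gamma$'s close to the identity, where no sharp lattice counting is needed; all the work is packaged into \equ{fact1} and the elementary volume computation.
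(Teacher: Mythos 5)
Your proposal is correct and follows essentially the same route as the paper: bound the number of relevant $\gamma$'s by a constant since $i\le[c_R]$, use \equ{fact1} together with $g_s g_t\le g_s$ to reduce the inner sum to $\frac{8R}{h}g_s(v)$, and then invoke the comparability of $\int_{T^1D}g_s\,d\tilde\mu$ with $r_s^n$. The only difference is cosmetic — you spell out the volume comparison that the paper merely asserts.
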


\begin{proof}

Observing that $\cup^{c_R}_{i=3} \Gamma_i $ is a finite set, we write $N$ as its cardinal. 
%Therefore, one can obtain that $g_s(v)g_t(\gamma\phi^{h(s-t)}v)$ is not zero only if for some $i$
Moreover, 
%from the fact 1. in section \ref{dfacts}
%\myquestion{how to refer list?}
%, we know that, for fixed $v\in T^1V$ and fixed $s\in \r$, $\# \{0 \leq t \leq s\mid g_s(v)g_t(\gamma\phi^{h(s-t)}v)=1\} \leq \frac{8R}{h}$. Therefore, 
using  \equ{fact1}, we get  that 
% we know tthat, $g_s(v)g_t(\gamma\phi^{h(s-t)}v)$ is zero if $t$ is outside of $ |h(s-t)-d(\tilde{p_0},\gamma\tilde{p_0})| <  2r_t+2r_s \le 4R$. This implies that 
\eq{equation 2 dis}{
\sum_{t=1}^{s} g_s(v)g_t(\gamma\phi^{h(s-t)}v)\le \frac{8R}{h}.
}
In addition, we notice the following facts:
\begin{itemize}
\item if $g_s(v)=0$, then $g_s(v)g_t(\gamma\phi^{s-t}v)$ in the left side vanishes;
\item if $g_s(v)=1$, then $g_s(v)g_t(\gamma\phi^{s-t}v)$ is at most $1$.
\end{itemize}

Therefore,  \equ{equation 2 dis} is equivalent to the following:
\begin{equation*}\label{equation 2 dis new}
\sum_{t=1}^{s} g_s(v)g_t(\gamma\phi^{h(s-t)}v)\le \frac{8R}{h}g_s(v).
\end{equation*}

% the fact that if  $g_s(v)=0$, the the integrand vanishes and if . Therefore, we can 
%And we can freely multiply the upper bound in the above by $g_s(v)$, because if $g_s(v)=0$ then the integrand vanishes, otherwise $g_s(v)=1$. 
This allows us to write
\begin{eqnarray*}
\sum_{s=1}^T \sum^{[c_R]}_{i=0}\sum_{\gamma \in \Gamma_i} \int_{T^1D}\sum_{t=1}^s g_s(v)g_t( \gamma \phi^{h(s-t)}v)\,d\tilde{\mu}(v)
%\\ &\le& N \sum^T_{s=1} \int_{T^1D} \frac{8R}{h} g_s(v)d\tilde{\mu}(v)\\
\le  \frac{8NR}{h}  \sum^T_{s=1} \int_{T^1D} g_s(v)\,d\tilde{\mu}(v).
\end{eqnarray*}

Since $\int_{T^1D} g_s(v)d\tilde{\mu}(v)$ is equivalent to $r_s^n$, up to a multiplicative constant, there exists some positive constant $c_{{7}}$, depending only on $R$ and $h$, such that
%\textbf{check constant number? Already use $c_4$}
\begin{eqnarray*}
 &&\sum_{s=1}^T \sum^{[c_R]}_{i=0}\sum_{\gamma \in \Gamma_i} \int_{T^1D}\sum_{t=1}^s g_s(v)g_t( \gamma \phi^{h(s-t)}v)\,d\tilde{\mu}(v)\le c_{{7}} \sum^T_{t=1} r_t^n .
\end{eqnarray*}

\end{proof}

\subsection{ A bound on the second part of \equ{twoparts}}%To get the bound on the second part, we will use the results of hyperbolic geometry and counting lattice points (Theorems  \ref{dis lemma4} and \ref{dis lemma5}) to obtain the following.
%( corresponding lemma 4 in \cite{FM}) 
\begin{Thm} \label{lemma 9}
There exist constants $c_{{8}}$ and $c_{{9}}$, %\attention{fix the numbering of constants!} 
only depending on $R$, such that 
\begin{eqnarray*}
&& \sum_{s=1}^T  \sum_{i=[c_R]}^{[s+\frac{6R}{h}]}  \sum_{\gamma \in \Gamma_i} \int_{T^1D} \sum_{t=1}^sg_s(v)g_t(\gamma\phi^{h(s-t)}v)\,d\tilde{\mu}(v)
\\ &\le& c_{{8}}   \sum_{s=1}^T   r_s^{n} \sum^{[s-10R-4]}_{t=[s+ \frac{c_{{4}}}h\ln  r_{s}-\frac{6R}{h}-2]}r_t ^{n-1}+ c_{{9}} \sum_{s=1}^T r_s^n   \sum^{s}_{t=1} r_t^n,
\end{eqnarray*}
where $c_{{4}}$ is as in Theorem \ref{dis lemma5}.
\end{Thm}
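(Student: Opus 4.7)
The plan is to bound the single-$\gamma$ integral using Theorem \ref{dis lemma4}, then to sum over $\gamma$ via Theorem \ref{dis lemma5}, applied with two different choices of shell width depending on how far $t$ is from $s$. Since $(r_t)$ is decreasing and $t \le s$, one has $\min(r_s, r_t) = r_s$ and $\max(r_s, r_t) = r_t$. Interpreting the integral as a constant multiple of the $m$-measure of $h$-step geodesics meeting $\tilde B_s$ and $\gamma^{-1}\tilde B_t$ (the decomposition $T^1\H^n = \mathscr{G}_h \times h\z$ contributes only an $O(1)$ factor since for each continuous geodesic through $\tilde B_s$ only boundedly many samples fall inside it), Theorem \ref{dis lemma4} yields
\[
\int_{T^1 D} g_s(v)\, g_t(\gamma \phi^{h(s-t)} v)\, d\tilde\mu(v) \;\le\; C\, r_s^{n}\, r_t^{n-1}\, e^{-(n-1) d(\tilde p_0, \gamma\tilde p_0)},
\]
with the integrand vanishing unless $|d(\tilde p_0, \gamma\tilde p_0) - h(s-t)| \le 2r_t \le 2R$. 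For $\gamma \in \Gamma_i$ this forces $|hi - h(s-t)| \le 6R$, so only $O(R/h)$ values of $i$ contribute to each pair $(s,t)$, and for these one has $e^{-(n-1)d} \le C e^{-(n-1)h(s-t)}$.

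Next I would sum over the relevant $\gamma$ in two regimes. In the \textbf{good} regime where $s - t \ge -(c_4/h)\ln r_s + 6R/h + 2$, the inequality $r_t \ge r_s$ gives $h(s-t) \ge -c_4 \ln(2 r_t) + \text{const}$, so Theorem \ref{dis lemma5} applies with the small shell width $\rho = 2 r_t$, yielding $\#\hat\Gamma_{s-t}(2 r_t) \ll r_t\,e^{(n-1)h(s-t)}$. The exponentials cancel against the per-$\gamma$ bound and each pair $(s,t)$ contributes $C r_s^n r_t^n$, summing to the second term $c_9 \sum_s r_s^n \sum_{t=1}^s r_t^n$. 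In the \textbf{bad} regime $t \in [s + (c_4/h)\ln r_s - 6R/h - 2,\; s - 10R - 4]$ the sharp count with $\rho = 2r_t$ is not guaranteed, but the cutoff $s - t \ge 10R + 4$ makes Theorem \ref{dis lemma5} applicable with the fixed width $\rho = h/2$, giving only $\#\Gamma_i \ll e^{(n-1)h(s-t)}$; each pair now contributes $C r_s^n r_t^{n-1}$, producing the first term. The residual range $s - t < 10R + 4$ contributes at most $O(1)$ terms per $s$ and is essentially the same object already absorbed by Theorem \ref{lemma 8}.

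The main obstacle is the precise calibration of the additive offsets $6R/h + 2$ and $10R + 4$ so that (i) the $O(R)$ discrepancy between $d(\tilde p_0, \gamma\tilde p_0)$ and $h(s-t)$ only introduces a bounded multiplicative loss; (ii) in the good regime, the hypothesis $hi \ge -c_4 \ln(2r_t)$ of Theorem \ref{dis lemma5} is implied by the stated condition on $s - t$, using $r_t \ge r_s$ to absorb the $\ln 2$ and the $O(R/h)$ gap between $i$ and $s-t$; and (iii) in the bad regime, the cutoff $s - t \ge 10R + 4$ simultaneously forces $hi \ge h/2 + t_0$ and $hi \ge -c_4 \ln(h/2)$, so the $h/2$-shell version of Theorem \ref{dis lemma5} is valid. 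Once these constants are set consistently, the estimates combine directly to yield the stated bound.
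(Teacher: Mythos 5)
Your proposal follows essentially the same route as the paper: bound each single-$\gamma$ integral by $r_s^n r_t^{n-1}e^{-(n-1)d}$ via Theorem \ref{dis lemma4}, then split the lattice count into a near regime (where only the crude bound $\ll e^{(n-1)hi}$ from Theorem \ref{theorem7.1} is available, producing the $r_t^{n-1}$ sum over the short range of length $\sim -\frac{c_4}{h}\ln r_s$) and a far regime (where Theorem \ref{dis lemma5} with shell width comparable to $r_t$ gives the extra factor $r_t$ and hence the term $r_s^n\sum_t r_t^n$); the paper organizes the dichotomy by the shell index $i$ relative to a threshold $V_s$ rather than by $s-t$, and gets the crude near-regime count directly from Theorem \ref{theorem7.1} rather than from Theorem \ref{dis lemma5} with width $h/2$ (which would need $h/2<1$), but these are cosmetic differences. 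The calibration details you defer are exactly the ones the paper resolves by its choice of $V_s$, so the argument is sound.
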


\begin{proof}

Let us fix  $s$ and produce an upper bound on $\int\limits_{T^1D}\sum_{t=1}^s g_s(v)g_t(\gamma\phi^{h(s-t)}v) \,d\tilde{\mu}(v)
$. This requires the following observations:
\begin{enumerate}
\item  \equ{equation 2 dis} tells us that $ \sum_{t=1}^{s} g_s(v)g_t(\gamma\phi^{h(s-t)}v)\le \frac{8R}{h}$  for any $s$ and $v$.
%\item For $v \in T^1D$, $d\tilde{\mu}$ is the product of the measure $dm$ on the space of discrete geodesic flows $\mathscr{G}_h$ and the counting measure on $\z h$ (see Definition \ref{spacedisgeodesic}). 
\item We know that $|d(\tilde{p_0},\gamma^{-1} \tilde{p_0})-hi|<2R$, i.e. $$hi-2R<d(\tilde{p_0},\gamma^{-1} \tilde{p_0})<hi+2R.$$ Therefore, $i \geq c_R\geq \frac{6R+2}{h}$ implies that %And , then if $i \ge \frac{5R+4}{h}$,we have 
$$d(\tilde{p_0},\gamma^{-1} \tilde{p_0})> hi-2R>6R+2-2R> 2.$$ %. Then by (\ref{eq1}), we know that the integrant is non-zero only if $|h(s-t)-hi|<5R$, then $hi \ge 5R+3$ implies $sh-th\ge 2$. 
Hence, we know that the distance between the centers of $B(\tilde{p_0},r_s)$ and $B(\gamma^{-1}\tilde{p_0},r_{t})$ is greater than 2. Thus by Theorem \ref{dis lemma4}, the measure $m$ of the set of discrete geodesics intersecting both $B(\tilde{p_0},r_s)$ and $B(\gamma^{-1}\tilde{p_0},r_{t})$ is bounded by $2c_{{6}} r_s^{n-1}r^{n-1}_{t}e^{-(n-1) (hi-1)} r_s$.
\item Moreover, $D$ contains the ball of center $\tilde{p_0}$ with radius $3R$. So we know that for fixed    $v$,  $\#\{z \in \z h :  g_t(\phi^{hz}v)>0 \}\leq \frac{3R}{h}$.
\item In addition,   notice that $g_s(v)g_t(\gamma\phi^{h(s-t)}v)$ is not zero only if $$|hi-h(s-t)|<6R.$$ This implies that 
\begin{equation*}\label{t}
%-\frac{5R}{h}<i-s+t<\frac{5R}{h}\text{(i.e. }
s-i-\frac{6R}{h}<t<s-i+\frac{6R}{h}.
\end{equation*}
\end{enumerate}

Now since $(r_t)$ is decreasing, for all $i \ge c_R = \frac{6R+2}{h}$, we have that
$$\int\limits_{T^1D}\sum_{t=1}^s g_s(v)g_t(\gamma\phi^{h(s-t)}v)\, d\tilde{\mu}(v)
\ll r_s^{n} r^{n-1}_{[s-i-\frac{6R}{h}-1]}e^{-(n-1)hi}.$$ %c_{{8}} 
Therefore, for all % $s+\frac{5R}{h}+1 \geq$
$i \ge c_R%\geq \frac{6R+2}{h}
$, we obtain that
\begin{equation*}\label{equation 3}
 \sum\limits_{\gamma \in \Gamma_i}\, \int \limits_{T^1D} \sum_{t=1}^{s} g_s(v)g_t(\gamma\phi^{h(s-t)}v) \,d\tilde{\mu}(v)
\ll  N_i r_s^{n} r_{[s-i-\frac{6R}{h}-1]}^{n-1} e^{-(n-1)hi}  ,%c_{{8}} 
\end{equation*}
where $N_i$ is the number of elements of $\Gamma_i$ such that the integrated function is not zero. Now we can consider the sum over all $s$ and $i\ge c_R$:

%\textbf{check do we really need the following rewrite as sum or can sum over s directly?}
%\new{Since  $r_t$ is decreasing, $r^{n-1}_{s-i-\frac{6R}{h}-1}  \le \sum\limits^{[s-i-\frac{6R}{h}-1]}_{t=[s-i-\frac{6R}{h}-2]}r_t ^{n-1} $, then one can get}

\begin{eqnarray*}
&& \sum_{s=1}^T  \sum_{i=[c_R]}^{[s+\frac{6R}{h}]}  \sum_{\gamma \in \Gamma_i} \int_{T^1D} \sum_{t=1}^sg_s(v)g_t(\gamma\phi^{h(s-t)}v)\,d\tilde{\mu}(v)
\\ &\ll&\sum_{s=1}^T  \sum_{i=[c_R]}^{[s+\frac{6R}{h}]} r_s^{n} r_{s-i-\frac{6R}{h}-1}^{n-1} e^{-(n-1)hi} N_i % c_{{8}} 
%\new{\\ &\le&  \sum_{s=1}^T  \sum_{i=c_R}^{[s+\frac{6R}{h}]}c_{{8}} r_s^{n}\sum^{[s-i-\frac{6R}{h}-1]}_{t=[s-i-\frac{6R}{h}-2]}r_t ^{n-1}e^{- (n-1)(hi-1)} N_i\text{ } \clubsuit  }
\end{eqnarray*}

%\textbf{define z before check where?}
%\attention{not sure how to state, the idea is that for fixed $s$, $r_{s-i} \geq e^{-i}$ when $i\gg{_s}0$ or put it as a assumption in  theorem \ref{dis lemma6}} 

Our goal now is to estimate $N_i$. Recall Theorem \ref{dis lemma5}, which allows us to estimate $\#\hat\Gamma_i(r)$  when $hi \geq \max(-c_{{4}} \ln r, r+ t_0) $  for some constants $c_{{4}},t_0>0$. We will take $r =  r_{s-i-\frac{6R}{h}-1}$. Indeed, since $(r_t)$ is decreasing and we have assumed that $r_t\leq R<1$, it follows that $$- \ln r_{s} \geq - \ln r_{s-i-\frac{6R}{h}-1} $$ and $r_{s-i-\frac{6R}{h}-1}<R$.

Now let us define \be \label{defvr} V_{s} \df \max\left(\frac{- c_{{4}} \ln r_{s}}{h} ,\frac{1+t_0}{h}\right)+c_R. \ee 

Then $i \geq V_{s} $ implies that $$hi \geq \max\big(-c_{{4}} \ln  r_{s-i-\frac{6R}{h}-1} , r_{s-i-\frac{6R}{h}-1}+ t_0\big). $$ Meanwhile, we also know that $g_s(v)g_t(\gamma\phi^{h(s-t)}v)$ is not zero only if $$d(\tilde{p_0},\gamma^{-1}\tilde{p_0})\in [ih-r_{s-i-\frac{6R}{h}-1},ih+r_{s-i-\frac{6R}{h}-1}],$$ where $i$ is such that  $\gamma^{-1} \in \Gamma_i$.  Therefore  
\begin{equation*}
\begin{split}N_i&\leq \#\{\gamma :  d(\tilde p_0,\gamma \tilde p_0) \in [hi- r_{s-i-\frac{6R}{h}-1}, hi+ r_{s-i-\frac{6R}{h}-1}]\} \\&  \le c_5 e^{(n-1) (hi+1)}r_{s-i-\frac{6R}{h}-1}\end{split}\end{equation*} when $i \geq V_s$.

By applying the fact that $(r_t)$ is decreasing,  we have the following:

\begin{eqnarray*}
&& \sum_{s=1}^T \sum^{[s+\frac{6R}{h}]}_{i=[V_{s} ]} r_s^{n} r_{s-i-\frac{6R}{h}-1}^{n-1} e^{-(n-1)(hi-1)} N_i %c_{{8}}
%\\&\ll&  \sum^T_{s=1} r_s^n   \sum^{[s+\frac{6R}{h}]}_{i=[V_{s} ]}r_{s-i-\frac{6R}{h}-1}^{n}
%\\&\ll& \sum^T_{s=1} r_s^n   \sum^{[s+\frac{6R}{h}]}_{i=[V_{s} ]}\sum^{[s-i-\frac{6R}{h}-1]}_{t=[s-i-\frac{6R}{h}-2]}r_t ^{n}
%\\&\ll&   \sum^T_{s=1} r_s^n   \sum^{s-[V_{s} ]-\frac{6R}{h}-1}_{t=1}r_t^{n} 
\ll \sum^T_{s=1} r_s^n   \sum^{s}_{t=1} r_t^n  .
\end{eqnarray*}

When $i<V_{s} $, we will use the   counting lattice point estimate (Theorem~\ref{theorem7.1}) to conclude that $N_i\ll e^{(n-1)hi}$. Recalling the definition of $V_{s} $, see (\ref{defvr}), we know that the assumption $i< -\frac  {c_{{4}}}h\ln  r_{s}$ implies that $i<V_{s} $. 
%\attention{not sure is it clear?} 
Meanwhile, since $(r_t)$ is decreasing, we have that
\begin{eqnarray*} 
 \sum_{s=1}^T   \sum_{i=c_R}^{[V_{s} ] }r_s^{n} r_{s-i-\frac{6R}{h}-1}^{n-1} e^{-(n-1)(hi-1)} N_i
% \\ &\ll&  \sum_{s=1}^T   r_s^{n}\sum_{i=c_R}^{[V_{s} ] }  r_{s-i-\frac{6R}{h}-1}^{n-1}
%\\ &\ll&  \sum_{s=1}^T   r_s^{n}\sum_{i=c_R}^{[V_{s} ] } \sum^{[s-i-\frac{6R}{h}-1]}_{t=[s-i-\frac{6R}{h}-2]}r_t ^{n-1}
%\\ &\ll&  \sum_{s=1}^T   r_s^{n}\sum_{i=c_R}^{-\frac  {c_{{4}}}h\ln  r_{s}  } \sum^{[s-i-\frac{6R}{h}-1]}_{t=[s-i-\frac{6R}{h}-2]}r_t ^{n-1}
\ll  \sum_{s=1}^T  r_s^{n}\sum^{s}_{t=[s+\frac  {c_{{4}}}h\ln  r_{s}-\frac{6R}{h}-2]}r_t ^{n-1}.
\end{eqnarray*}

Putting it all together, we conclude that
\begin{equation*}\begin{aligned}
&\sum_{s=1}^T  \sum_{i=[V_{s} ]}^{[s+\frac{6R}{h}]} \sum\limits_{\gamma \in \Gamma_i} \int \limits_{T^1D} \sum_{t=1}^s g_s(v)g_t(\gamma\phi^{h(s-t)}v)  \,d\tilde{\mu}(v)
%\\ &=&\sum_{s=1}^T  \sum_{i=[c_R]}^{[V_{s} ]}  \sum_{\gamma \in \Gamma_i} \int_{T^1D} \sum_{t=1}^sg_s(v)g_t(\gamma\phi^{h(s-t)}v)d\tilde{\mu}(v)
%\\ &+& \sum_{s=1}^T  \sum_{i=[V_{s} ]}^{[s+\frac{6R}{h}]}  \sum_{\gamma \in \Gamma_i} \int_{T^1D} \sum_{t=1}^sg_s(v)g_t(\gamma\phi^{h(s-t)}v)d\tilde{\mu}(v)
\\ \le c_{{8}}&  \sum_{s=1}^T  r_s^{n} \sum^{s}_{t=[s+ \frac{c_{{4}}}{h}\ln  r_{s}-\frac{6R}{h}-2]}r_t ^{n-1} + c_{{9}} \sum^T_{s=1} r_s^n   \sum^{s}_{t=1} r_t^n.
\end{aligned}\end{equation*}
%where $C_{n,h}=\frac{1}{h}$.
\end{proof}

\subsection{Completion of the proof of Theorem \ref{dis lemma6}}
\begin{proof}

Recall that so far we have
$$
\int_{T^1V}S_T[\mathcal{F}](v)^2 \,d\mu(v) \le   c_{{7}} \sum_{s=1}^{T} r^{n}_s+ c_{{8}} \sum_{s=1}^T r_s^{n} \sum^{s}_{t=[s+\frac{c_{{4}}}{h}\ln  r_{s}-\frac{6R}{h}-2]}r_t ^{n-1}+c_{{9}} \sum^T_{s=1} r_s^n   \sum^{s}_{t=1} r_t^n.$$

Now %we need to
let us take $C_1 = c_{{4}}/h$, $C_2 = 2 + 6R/h$, and let us assume \equ{generalcondition}, i.e.\ that there exist $C,s_0$ such that
$${\sum^{s}_{t=[s+\frac{c_{{4}}}{h}\ln  r_{s}-\frac{6R}{h}-2]}r_t ^{n-1}\le C \sum_{t=1}^{s}r_t^{n} \quad\text{when $s \ge s_0$}.}
$$
%put $C = c_{{4}}$ and
Then we can write
\begin{equation*}\begin{split}
\int_{T^1V}S_T[\mathcal{F}](v)^2 \,d\mu(v) &\le   c_{{7}} \sum_{s=1}^{T} r^{n}_s+  c_{{8}}\sum_{s=1}^{s_0-1}  r_s^{n} \sum^{s}_{t=[s+\frac{c_{{4}}}{h}\ln  r_{s}-\frac{6R}{h}-2]} r_t ^{n-1}\\&+C \cdot c_{{8}}\sum_{s=s_0}^T  r_s^{n}  \sum_{t=1}^{s}r_t^{n}+c_{{9}} \sum^T_{s=1} r_s^n   \sum^{s}_{t=1} r_t^n\\
&\le  c_{{7}} \sum_{s=1}^{T} r^{n}_s+ c_{13} \sum^T_{s=1} r_s^n   \sum^{s}_{t=1} r_t^n + c_{14}
\end{split}\end{equation*}
 Since $\int_{T^1V}I_T[\mathcal{F}]^2 d %\tilde
 {\mu}(v)$ is equivalent, up to a multiplicative constant, to $\sum\limits_{s=1}^{T}r_s^{n}\sum\limits_{t=1}^{s}r_t^{n}$, and with the assumption that $\int_{T^1V}I_T[\mathcal{F}]^2 d %\tilde
 {\mu}(v) \to \infty$, $T \to \infty$, one can easily conclude that $ \frac{S_T[\mathcal{F}]}{I_T[\mathcal{F}]}$ is bounded in $L ^2$ -norm.
 % by claiming that
%\begin{equation} \label{cond:1}
%  \sum^{s}_{t=[s+ \frac{c_{{4}}\ln  r_{s}-R}{h}-2]}r_t ^{n-1}\leq \sum_{t=1}^{s}r_t^{n} \text{      , up to some contants, when } 0 \ll s 
%\end{equation}
\end{proof}

\section{Proof of Theorem \ref{discrete1}}
\label{completion}
%\section{The proof  of main results}
\ignore{
In this section, we are going to apply Theorem \ref{discrete prop} to derive our main results: Theorem \ref{discrete1} and Theorem \ref{discrete2}. %and \ref{Theorem 1 tangent case}.

%\subsection{proof of Theorem \ref{discrete1} and Theorem \ref{discrete2}}
%In this subsection, we are going to prove theorem \ref{discrete1} and \ref{discrete2}, which show the Borel-Cantelli property of discrete geodesic flows visiting the shrinking targets on $V=\Gamma \backslash \H^n$.

\begin{proof}[Proof of Theorem \ref{discrete2}]
Convergence case:
%\textbf{change the following $r_t^n$, sum and convergence use Borel-Cantelli }
In order to apply the Borel Cantelli lemma, we can define $A_t =\{v \in T^1V :  \phi^{ht} v \in %\pi^{-1}(
B_t
%)
%_{t \in \z}
\}$. Now by the convergence part of the Borel-Cantelli lemma, the assumption that $$I_{\infty}[\mathcal{F}]=\sum_{t=1}^{\infty} \int_{T^1V}f_t(\phi^{ht} v)d\mu =\sum_{t=1}^{\infty}\mu(A_t)=\sum_0^{\infty} r_t^{n}< \infty$$ implies that $\{v\in T^1 V :  {%t \geq 0:
 \phi^{ht} v \in 
%\pi^{-1}(
B_t
%)
}$ i.o.$\}$ has zero measure.  Thus we know that  $\{v\in T^1 V:  \{t \geq 0: 
%\pi(
\phi^{ht} v 
%)
\in B_t\} \text{ is unbounded} \}$ has zero measure, giving us the conclusion for the convergence case.

%\attention{check the following}
Divergence case: We assume that $I_{\infty}[\mathcal{F}]=\sum\limits_0^{\infty}r_t^{n}= \infty$, and assume that \equ{generalcondition} holds. 
%$$ \sum\limits^{s}_{t=[s+ \frac{c_{{4}}\ln  r_{s}-R}{h}-2]}r_t ^{n-1}\leq \sum\limits_{t=1}^{s}r_t^{n}, \text{   when } 0 \ll s.$$ 
Therefore, we can apply Theorem \ref{dis lemma6}. Now we want to show that the set $$\{t \in \z_{\geq0} : \phi^{ht} v \in B_t\}$$ is unbounded for $\mu$-almost every $v \in T^1 V$. In other words, we need to show the measure of the set $\{v \in T^1 V : \{t \in \z_{\geq0}: \phi^{ht} v \in B_t\}\text{ is bounded} \}$ is zero.

First we claim that the above set lies in the set $\{v \in T^1 V :  S_{\infty}[\mathcal{F}](v) < \infty \}$.
Indeed, let $v \in T^1 V$ be such that $\{t \in \z_{\geq0} : \phi^{ht} v \in B_t\}$ is bounded, which means that there exists some $T>0$, such that for $ht \geq T$, $\phi^{ht}(v)$ is not in $B_t$. Since the balls $B_t$ are decreasing, we have that for any $s \geq t$, $\phi^{ht} v \notin B_s$. Then $$S_{\infty}[\mathcal{F}](v)=\sum_{t=1}^{\infty} f_t(\phi^{ht} v) = \sum_{t=1}^{T} f_t(\phi^{ht} v) < \infty.$$
This shows that $$\{v \in T^1 V:  \{t \in \z_{\geq0}: \phi^{ht} v \in B_t\}\text{ is bounded}\} \subset  \{v \in T^1 V:  S_{\infty}[\mathcal{F}](v) < \infty \}.$$
Combining Theorems \ref{dis lemma6} and \ref{discrete prop}, we conclude that $S_{\infty}[\mathcal{F}](v)=+ \infty $  for $\mu$-almost every $v$.
Therefore $ \{t \geq 0:  \phi^{ht} v \in B_t\}$ is bounded only for a set of $v$ of zero measure.  %Proof of corollary:
\end{proof}

\begin{proof}[Proof of Theorem \ref{discrete1}]}
%The condition in \ref{discrete1} is a sufficient condition \ref{discrete2}. 
Recall that we are given  a non-increasing sequence $r_t$ which tends to $0$ as $t \to \infty$ and such that $\sum\limits_0^{\infty}r_t^{n}= \infty$ and in addition satisfying (\ref{additional}), that is, for some $C_0 , s_0 > 0$ it holds that
\be  \frac{- \ln  r_{s}}{r_s} \le C_0 s\text{ when }s\ge s_0.\label{additionalnew} \ee
We need to show that this sequence satisfies condition \equ{generalcondition}.  This will be an easy consequence of the following lemma:%. 
%Indeed, assuming the above lemma, we need to deduce  \ref{generalcondition}. 
%So it remains to prove the lemma.
\begin{Lemma}\label{thelastlemma} Under the above assumptions, for any  $C_1, C_2 > 0$ there exist $C_3,T > 0$ such that 
\[\sum^{s}_{t=[s+ C_1\ln  r_{s}-C_2]}r_t ^{n-1}\leq C_3 \sum_{t=1}^{s}r_t^{n}\text{ when }s\geq T. %\label{generalconditionnew} 
\]
\end{Lemma}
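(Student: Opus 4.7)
The plan is to prove the inequality by extracting a common factor $r_{k_s}$ from both sides and reducing to a single ratio, which I can then control via the growth bound (\ref{additionalnew}). Write $k_s \df [s + C_1 \ln r_s - C_2]$ for the lower index of the sum and $L_s \df s - k_s + 1$ for its number of terms; by construction $L_s \le -C_1 \ln r_s + C_2 + 2$. Monotonicity of $(r_t)$ gives $r_t^{n-1} \le r_{k_s}^{n-1}$ for $t \ge k_s$, so
\[
\sum_{t=k_s}^{s} r_t^{n-1} \le L_s\, r_{k_s}^{n-1}.
\]
Dropping the tail $t > k_s$ on the right-hand side and using $r_t^n \ge r_{k_s}^n$ for $t \le k_s$ gives
\[
\sum_{t=1}^{s} r_t^n \ge k_s\, r_{k_s}^n,
\]
which is meaningful once $k_s \ge 1$. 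Dividing and using $r_{k_s} \ge r_s$ reduces the desired estimate to showing that $L_s / (k_s r_s)$ stays bounded for large $s$.

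At this point I would invoke (\ref{additionalnew}): the bound $-\ln r_s \le C_0 s r_s$ yields $L_s \le C_0 C_1 s r_s + C_2 + 2$, and since $r_s \to 0$ one has $k_s \ge s - L_s \ge s/2$ for all sufficiently large $s$. Plugging in,
\[
\frac{L_s}{k_s r_s} \le \frac{C_0 C_1 s r_s + C_2 + 2}{(s/2) r_s} = 2 C_0 C_1 + \frac{2(C_2 + 2)}{s r_s}.
\]
The remainder term is harmless because (\ref{additionalnew}) also gives $s r_s \ge -\ln r_s / C_0 \to \infty$ (since $r_s \to 0$ forces $-\ln r_s \to \infty$). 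Hence for $s$ large enough the ratio is at most $2 C_0 C_1 + 1$, and one can take $C_3 = 2 C_0 C_1 + 1$ together with $T$ large enough so that all of the above inequalities hold.

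I do not expect any real obstacle: the argument is just a monotonicity comparison followed by an application of the hypothesis. The only mildly delicate point is the index bookkeeping needed to ensure $k_s \ge 1$ and to bound $L_s$ cleanly in terms of $-\ln r_s$, but both hold for large $s$ once $r_s$ is sufficiently small.
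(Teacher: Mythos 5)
Your argument is correct and is essentially the paper's own proof: both compare the short sum against (number of terms) $\times$ (largest term) $\le L_s r_{k_s}^{n-1}$, bound the long sum below by $k_s r_{k_s}^n$, and use \eqref{eq:additional} in the form $-C_1\ln r_s \le C_0C_1 s r_s$ to control the ratio, arriving at the same constant $C_3 = 2C_0C_1+1$. Your bookkeeping (dividing directly rather than the paper's splitting $r_t^{n-1} = (r_t^{n-1}-r_t^n)+r_t^n$ and adding terms to both sides) is cleaner but not a different route.
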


\begin{proof}
By (\ref{additionalnew}), \be - C_1 \ln  r_{s}\le C_1 C_0 sr_s \text{ when }s\ge s_0.
\label{additionalc0}
\ee
Take $s_1$  such that $- \ln r_s\geq  \frac{ C_2 }{C_1}$  when $s>s_1$. This and (\ref{additionalc0}) imply that $$ C_2  \leq C_1 C_0 sr_{s}$$ \text{and} 
\be 2C_1 C_0 sr_{s} \geq -C_1 \ln  r_{s}+C_2. \label{srs0prime} \ee
Since $r_s$ is non-increasing, 
(\ref{srs0prime}) 
implies that
\begin{align*} 
2C_1C_0 sr_{[s+ C_1 \ln  r_{s}-C_2]} \geq -C_1 \ln  r_{s}+C_2
\end{align*} 
when $s>\max(s_0,  s_1)$.
Due to the fact that $0<r_s< r_{[s+ C_1 \ln  r_{s}-C_2]}<1$, we have that
\begin{align*} 
2C_1C_0  sr_{[s+ C_1 \ln  r_{s}-C_2]}
&\geq \frac{1}{2} (-C_1 \ln  r_{s}+C_2)(1 +r_{[s+ C_1 \ln  r_{s}-C_2]}-r_{s});
\end{align*} 
thus
\begin{align*} 
2C_1C_0  sr_{[s+ C_1 \ln  r_{s}-C_2]}
&\geq (-C_1 \ln  r_{s}+C_2)(1 +r_{[s+ C_1 \ln  r_{s}-C_2]}-r_{s})\\
&\geq (-C_1 \ln  r_{s}+C_2)(1-r_{s} )+ (-C_1 \ln  r_{s}+C_2)r_{[s+ C_1 \ln  r_{s}-C_2]}.
\end{align*} 
Therefore, when $s>\max(s_0, s_1)$, we obtain that
\be 
( 
2C_1C_0  s+ C_1 \ln  r_{s}-C_2)r_{[s+  C_1 \ln  r_{s}-C_2]}\geq (-C_1 \ln  r_{s}+C_2)(1-r_{s} ).\label{srs2}
\ee

Now take $s_2 > 0$  such that $ r_s<\frac1{4C_1^2C_0^2} $  when $s >  s_2$, and let 
  $T\df \max(s_0,  s_1,\ s_2)$.
Then (\ref{srs0prime}) implies that, when $s>T$,

\[
 s \geq 
2C_1 C_0( -C_1 \ln  r_{s}+C_2).%\label{srs1} 
\]
Thus, by adding $2C_1C_0 s+( 2C_1C_0+1) (C_1\ln  r_{s}- C_2)$ 
to both sides, we conclude that, when $s>T$
$$ ( 2C_1C_0+1) ( s+C_1\ln  r_{s}-C_2)\geq 2C_1C_0 s+C_1\ln  r_{s}-C_2 .$$
Now let us define  $C_3 \df 2C_1C_0 +1$.

Then we have that, when $s>T$
$$  C_3 ( s+C_1\ln  r_{s}-C_2)\geq 2C_1C_0 s+C_1\ln  r_{s}-C_2 .$$

which, in view of (\ref{srs2}), implies 
$$C_3 ( s+ C_1 \ln  r_{s}-C_2)r_{[s+  C_1 \ln  r_{s}-C_2]}\geq (-C_1 \ln  r_{s}+C_2)(1-r_{s} ).$$

Since $r_{[s+  C_1 \ln  r_{s}-C_2]}>0$, the above inequality implies that, when $s\ge T$,
\be
C_3 ( s+ C_1 \ln  r_{s}-C_2)r^n_{[s+  C_1 \ln  r_{s}-C_2]}\geq (-C_1 \ln  r_{s}+C_2)(1-r_{s} )r^{n-1}_{[s+  C_1 \ln  r_{s}-C_2]}.\label{twosided}\ee

On the other hand, since $r_s$ is non-increasing, one will notice that
$$ C_3 ( s+C_1\ln  r_{s}-C_2)r_{[s+ C_1\ln  r_{s}-C_2)]}^{n}\leq  C_3 \sum_{t=1}^{[s+C_1\ln  r_{s}-C_2)]}r_t^{n},$$
and 
\begin{align*} 
\sum^{s}_{t=[s+ C_1\ln  r_{s}-C_2]}(r_t ^{n-1}-r_t ^{n})&=\sum^{s}_{t=[s+ C_1 \ln  r_{s}-C_2]}(1-r_t )r_t ^{n-1}\\
&\leq (- C_1 \ln  r_{s}+C_2)(1-r_{s}) r_{[s+C_1 \ln  r_{s}-C_2]} ^{n-1} .
\end{align*}
Therefore, by (\ref{twosided}), we have that, when $s\ge T$,
$$  \sum^{s}_{t=[s+ C_1 \ln  r_{s}-C_2]}(r_t ^{n-1}-r_t ^{n})\leq C_3\sum_{t=1}^{[s+ C_1 \ln  r_{s}-C_2]}r_t^{n},$$
and hence
$$  \sum^{s}_{t=[s+ C_1 \ln  r_{s}-C_2]}r_t ^{n-1}\leq C_3 \sum_{t=1}^{[s+ C_1 \ln  r_{s}-C_2]}r_t^{n}+ \sum^{s}_{t=[s+ C_1 \ln  r_{s}-C_2]}r_t ^{n}\leq C_3 \sum_{t=1}^{s}r_t^{n}.$$

\end{proof}
This shows that (\ref{additional}) implies \equ{generalcondition},  and finishes the proof of   Theorem \ref{discrete1}. \qed

\begin{thebibliography}{99}
%\end{thebibliography}

% REQUIRED: The start of arabic-numbered plain pages.
% GSAS formatting allowance: The back may be single-spaced.
%\backmatter
%\singlespacing
%\bibliographystyle{amsplain}% Bibliography.
%\bibliography{topology}

%{
%\bibliographystyle{alpha}
%\begin{thebibliography}{23}

%\bibitem{6}
%D.V.\ Anosov, \textsl{Geodesic flows on closed Riemann manifold with negative curvature}, 
%Proceedings of the Steklov Institute of Mathematics, No. 90 (1967),  American Mathematical Society, Providence, R.I., 1969.

%\bibitem{MB}
%M.B.\ Bekka and M.\ Mayer, \textsl{Ergodic theory and topological dynamics of group actions on homogeneous spaces}, London Mathematical Society Lecture Note Series {\bf 269}, Cambridge University Press, Cambridge, 2000.

%%\bibitem{Bo}
%N.\ Bourbaki, \textsl{Elements of mathematics. Lie groups and Lie algebras}, Addison-Wesley, 1975. 

%\bibitem{book}
%M.\ Brin and G.\ Stuck, \textsl{Introduction to Dynamical Systems}, Cambridge University Press, Cambridge, 2002.

\bibitem{Ath2} J.\ Athreya, \textsl{Logarithm laws and shrinking target properties}, Proc.\ Math.\ Sci.\ {\bf 119} (2009), no.\ 4, 541--557.
\bibitem{Ath1} \bysame, \textsl{Cusp excursions on parameter spaces}, J.\ Lond.\ Math.\ Soc.\ {\bf 87} (2013) 741--765.

\bibitem{BO} Y.\ Benoist and H.\ Oh, \textsl{Effective equidistribution of $S$-integral points on symmetric varieties}, Annales de L'Institut Fourier {\bf 62} (2012), 1889--1942.

\bibitem{CK}
N.\ Chernov and D.\ Kleinbock, \textsl{Dynamical Borel-Cantelli lemmas for Gibbs measures}, Israel J.\ Math.\ {\bf 122} (2001), 1--27. 

%\bibitem{Econ}
%J.\ Futoma and B.\ Southworth \textsl{Discrete Chaotic Dynamical Systems in Economic Models}, Preprint (2011),   \url{https://math.dartmouth.edu/archive/m53f11/public_html/proj/FutomaSouthworth.pdf}.

%\bibitem{GR} 
%H.\ Garland and M.S.\ Raghunathan \textsl{Fundamental Domains for Lattices in (R-)rank 1 Semisimple Lie Groups}
%Annals of Mathematics, Second Series, Vol. 92, No. 2 (Sep., 1970), pp. 279-326

\bibitem{Dolg} D.\ Dolgopyat, \textsl{Limit theorems for partially hyperbolic systems}, Trans.\ Amer.\ Math.\
Soc. {\bf 356} (2004), 1637--1689.
\bibitem{Gal} S.\ Galatolo,  \textsl{Dimension and hitting time in rapidly mixing systems},  Math.\ Res.\ Lett.\ {\bf 14} (2007), 797--805.

\bibitem{AA}
A.\ Gorodnik and A.\ Nevo, \textsl{Counting lattice points}, J.\ Reine Angew.\ Math.\ {\bf 663} (2012), 127--176. 

%\bibitem{GN}
%A.\ Gorodnik and A.\ Nevo, \textsl{The ergodic theory of lattice subgroups},  Annals of Mathematics Studies {\bf 172}, Princeton University Press, Princeton, N.J., 2010.

\bibitem{GS} A.\ Gorodnik and N.\ Shah,  \textsl{Khinchin's theorem for approximation by integral points on quadratic varieties},  Math.\ Ann.\ {\bf 350} (2011), 357--380.

\bibitem{note}
N.\ Haydn, M.\ Nicol, T.\ Persson   and S.\ Vaienti, \textsl{A note on Borel-Cantelli lemmas for non-uniformly hyperbolic dynamical systems}, Ergodic Theory Dynam.\ Systems {\bf 33} (2013), no.\ 2, 475--498. 

\bibitem{huber}H.\ Huber, \textsl{\"Uber eine neue Klasse automorpher Functionen und eine Gitterpunktproblem
in der hyperbolischen Ebene}, Comment.\ Math.\ Helv.\ {\bf 30} (1956), 20--62.

% \bibitem{42}
%E.\ Hopf. \textsl{Abzweigung einer periodischen l osung von einer station aren l osung eines differential-systems} Berichte Math.-Phys. Kl. S achs. Akad. Wiss. Leipzig. Math.-Nat. Kl., 94:1Ð22, 1942.

\bibitem{GM}
D.Y.\ Kleinbock and G.A.\ Margulis \textsl{Logarithm laws for flows on homogeneous spaces}, Invent.\ Math.\ {\bf 138} (1999), no.\ 3, 451--494. 

%\bibitem{1998}
%D.Y.\ Kleinbock and G.A.\ Margulis, \textsl{Flows on homogeneous spaces and Diophantine approximation on manifolds}, Ann. of Math. (2) 148 (1998), no. 1, 339Ð360.

\bibitem{FM}
F.\ Maucourant, \textsl{Dynamical Borel-Cantelli lemma for hyperbolic spaces}, Israel J.\ Math.\ {\bf 152} (2006), no.\ 1, 143--155.

\bibitem{LF} P.\ Lax and R.\ Phillips,
\textsl{The asymptotic distribution of lattice points in Euclidean and Non-Euclidean spaces}, J.\ Funct.\ Anal.\
{\bf 46}
(1982), 280--350.

\bibitem{CM}
 C.C.\ Moore, \textsl{Exponential decay of correlation coefficients for geodesic flows}, in: Group representations, ergodic theory, operator algebras, and mathematical physics (Berkeley, CA, 1984), 163--181, 
Math.\ Sci.\ Res.\ Inst.\ Publ.\ {\bf 6}, Springer, New York, 1987. 
 
% \bibitem{59}
%Y.B.\ Pesin, \textsl{Lectures on partial hyperbolicity and stable ergodicity}, Z\"urich Lectures in Advanced Mathematics, European Mathematical Society (EMS), Z\"urich, 2004. 

\bibitem{PW}
W.\ Philipp, \textsl{Some metrical theorems in number theory}, Pacific J.\ Math.\ {\bf 20} (1967), no.\ 1, 109--127.

%\bibitem{JP}
%J.\ Pinkerton, \textsl{Introductory Ergodic Theory and the Birkhoff Ergodic Theorem}, Preprint (2014),  \url{http://www.math.cornell.edu/~templier/junior/final_paper/James_Pinkerton.pdf}.

%\bibitem{61}
%H.J.\ Poincar\'e, \textsl{Les m\'ethodes nouvelles de la m\'echanique c\'eleste}, Tome 1--3,
% Les Grands Classiques Gauthier-Villars, Librairie Scientifique et Technique Albert Blanchard, Paris, 1987.

\bibitem{MR}
 M.\ Ratner, \textsl{The rate of mixing for geodesic and horocycle flows}, Ergodic Theory Dynam.\ Systems {\bf 7} (1987), no.\ 2, 267--288. 

%\bibitem{SBS}
%S.\ Ben Said, \textsl{Representations of linear Lie groups and applications}, \url{http://www.univ-orleans.fr/mapmo/membres/anker/enseignement/GL/BenSaid.pdf}.

%\bibitem{SN}
%N.A.\ Shah, \textsl{Limiting distributions of curves under geodesic flow on hyperbolic manifolds}. Duke Math.\ J.\ {\bf 148} (2009), no.\ 2, 251--279.

%\bibitem{72}
%Ya.G.\ Sinai \textsl{Introduction to ergodic theory}, Mathematical Notes {\bf 18},  Princeton Universiry Press, Princeton, N.J., 1976.

%\bibitem{1967}
%W. Philipp, \textsl{Some metrical theorems in number theory}, Pacic J. Math. 20, 109127 (1967)

\bibitem{1982}
D.\ Sullivan, \textsl{Disjoint spheres, approximation by quadratic numbers and the logarithm law for geodesics}, Acta Math\. {\bf 149} (1982), 215--237.

%\bibitem{64}
%S.\ Smale. \textsl{Diffeomorphisms with many periodic points}, in: Differential and Combinatorial Topology (A Symposium in Honor of Marston Morse), pp.\ 63--80, Princeton University Press, Princeton, N.J.,  1965.

%\bibitem{69}
%B.\ van der Pol and B.\ van der Mark, \textsl{Frequency demultiplication}, Nature {\bf 120} (1927), 363--364.

%\bibitem{DND}
%DO NGOC DIEPP \textsl{PROCEDURE OF QUANTIZATION OF FIELDS PART I. REPRESENTATION THEORY OF SL2(R)} \url{

\end{thebibliography}
%}

\end{document}